\newcommand{\kom}[1]{}
\renewcommand{\kom}[1]{{\bf [#1]}}
 \def\1{\raisebox{2pt}{\rm{$\chi$}}}
\def\a{{\bf a}}
\newtheorem{theorem}{Theorem}[section]
\newtheorem{lemma}[theorem]{Lemma}
\newtheorem{remark}[theorem]{Remark}
\newtheorem{example}[theorem]{Example}
\newcommand{\R}{{\mathbb R}}
 \newcommand{\eps}{{\varepsilon}}
 \def\1{\raisebox{2pt}{\rm{$\chi$}}}
\newcommand{\abs}[1]{\left|#1\right|}
\newcommand{\norm}[1]{\left|\left|#1\right|\right|}
\newcommand{\Rn}{\mathbb{R}^n}
\def\vint_#1{\mathchoice%
          {\mathop{\kern 0.2em\vrule width 0.6em height 0.69678ex depth -0.58065ex
                  \kern -0.8em \intop}\nolimits_{\kern -0.4em#1}}%
          {\mathop{\kern 0.1em\vrule width 0.5em height 0.69678ex depth -0.60387ex
                  \kern -0.6em \intop}\nolimits_{#1}}%
          {\mathop{\kern 0.1em\vrule width 0.5em height 0.69678ex
              depth -0.60387ex
                  \kern -0.6em \intop}\nolimits_{#1}}%
          {\mathop{\kern 0.1em\vrule width 0.5em height 0.69678ex depth -0.60387ex
                  \kern -0.6em \intop}\nolimits_{#1}}}
\def\vintslides_#1{\mathchoice%
          {\mathop{\kern 0.1em\vrule width 0.5em height 0.697ex depth -0.581ex
                  \kern -0.6em \intop}\nolimits_{\kern -0.4em#1}}%
          {\mathop{\kern 0.1em\vrule width 0.3em height 0.697ex depth -0.604ex
                  \kern -0.4em \intop}\nolimits_{#1}}%
          {\mathop{\kern 0.1em\vrule width 0.3em height 0.697ex depth -0.604ex
                  \kern -0.4em \intop}\nolimits_{#1}}%
          {\mathop{\kern 0.1em\vrule width 0.3em height 0.697ex depth -0.604ex
                  \kern -0.4em \intop}\nolimits_{#1}}}
\newcommand{\kint}{\vint}
\newcommand{\aveint}[2]{\mathchoice%
          {\mathop{\kern 0.2em\vrule width 0.6em height 0.69678ex depth -0.58065ex
                  \kern -0.8em \intop}\nolimits_{\kern -0.45em#1}^{#2}}%
          {\mathop{\kern 0.1em\vrule width 0.5em height 0.69678ex depth -0.60387ex
                  \kern -0.6em \intop}\nolimits_{#1}^{#2}}%
          {\mathop{\kern 0.1em\vrule width 0.5em height 0.69678ex depth -0.60387ex
                  \kern -0.6em \intop}\nolimits_{#1}^{#2}}%
          {\mathop{\kern 0.1em\vrule width 0.5em height 0.69678ex depth -0.60387ex
                  \kern -0.6em \intop}\nolimits_{#1}^{#2}}}
\newcommand{\ud}{\, d}
\newcommand{\half}{{\frac{1}{2}}}
\newcommand{\ol}{\overline}
\newcommand{\Om}{\Omega}
\newcommand{\I}{\textrm{I}}
\newcommand{\II}{\textrm{II}}
\newcommand{\tr}{\operatorname{tr}}
\renewcommand{\a}{\alpha}
\def\SL@eqnlefttext #1{\hbox to 0pt{\kern 75 pt %or something else
\llap{\SL@margintext{#1}\quad}\hss}}
\begin{document}

\title[Coupling and Ishii-Lions]{Connections between coupling and Ishii-Lions methods for tug-of-war with noise stochastic games}
\author[Anttila]{Riku Anttila}
\address{Department of Mathematics and Statistics, University of
Jyv\"askyl\"a, PO~Box~35, 40014 Jyv\"askyl\"a, Finland}
\email{riku.t.anttila@jyu.fi}

\author[Manfredi]{Juan J.~Manfredi}
\address{Department of Mathematics, University of
Pittsburgh, PA 15260, USA}
\email{manfredi@pitt.edu}

\author[Parviainen]{Mikko Parviainen}
\address{Department of Mathematics and Statistics, University of
Jyv\"askyl\"a, PO~Box~35, 40014 Jyv\"askyl\"a, Finland}
\email{mikko.j.parviainen@jyu.fi}

\date{\today}
\keywords{Coupling method; Dynamic programming principle; H\"older-regularity; Ishii-Lions method; p-Laplace; Tug-of-war; Tug-of-war with noise; Viscosity solutions} \subjclass[2020]{91A15; 35J92; 35B65; 35J60; 49N60}
\thanks{MP is supported by the Research Council of Finland, project 360185. RA was supported by Eemil Aaltonen foundation. Part of this research was done during a visit of JJM to the University of Jyv\"askyl\"a in 2024 under the JYU Visiting Fellow Programme. JJM is supported by the Simons 
Collaboration Gift 962828}

\begin{abstract}
We present  a streamlined account of two different regularity methods as well as their connections. 
We consider the coupling method in the context of tug-of-war with noise stochastic games, and consider viscosity solutions of the $p$-Laplace equation in the context of the Ishii-Lions method. 
\end{abstract}

\maketitle

\tableofcontents

\section{Introduction}

In this paper, we carefully establish the connection between the coupling method in the theory of stochastic games and the Ishii-Lions regularity method in the theory of viscosity solutions. The coupling method has its origins in the 1986 work of Lindvall and Rogers \cite{lindvallr86}. In the theory of viscosity solutions, the coupling method is connected to the doubling of variables technique and the theorem on sums within the Ishii-Lions PDE regularity framework introduced in \cite{ishiil90}. These developments appear to have initially evolved as independent threads.

In Section~\ref{sec:background}, we will review the random walk, tug-of-war and tug-of-war with noise and related dynamic programming principles (DPPs). Moreover, we will explain the main lines of the method of couplings starting from the simpler cases and proceeding to the tug-of-war with noise, for which the method of couplings was established in \cite{luirop18}.  The idea is that looking at the games starting at two different points, this may be interpreted as a single game in a product space where the dimension is doubled by introducing suitable couplings for the original probability measures. This also gives rise to $2n$-dimensional dynamic programming principles ($2n$-DPPs). 

In terms of the tug-of-war game, it was discovered in \cite{peresssw09} while the connection between tug-of-war with noise and the $p$-Laplacian was observed in \cite{peress08}. In this work, we use the version developed in \cite{manfredipr12}. It would be almost impossible to cover all the recent tug-of-war literature,  but we mention some additional references later on.

We will also explain how the actions needed to prove regularity estimates are motivated by stochastic intuition. At first sight, the Ishii-Lions method,  in particular the choices made in the theorem on sums,  might appear less intuitive. Therefore, we start with the method of couplings and $2n$-DPPs and derive the corresponding $2n$-dimensional partial differential equation ($2n$-PDE).  Next, we establish the connection between $2n$-PDE and the Ishii-Lions method.  This is done in Sections~\ref{sec:laplacian}, \ref{sec:infty} and \ref{sec:p-lap} for the Laplacian and random walk, the infinite Laplacian and tug-of-war game, as well as the $p$-Laplacian and the tug-of-war with noise, respectively. 

For example in terms of the random walk ($p=2$) the DPP for the value of the process reads as 
\begin{align*}
u_{\eps}(x)=\kint_{B_{\eps}(x)}u_{\eps}\ud z:=\frac{1}{\abs{B_{\eps}}}\int_{B_{\eps}(x)}u_{\eps}\ud z
\end{align*}
and the $2n$-DPP as
\begin{align*}
G(x,y)&=\kint_{B_{\eps}(0)}G(x+h, y+P_{x,y}(h)) \ud h,    
\end{align*}
where $P_{x,y}$ is a reflection coupling; the stochastic heuristics for this choice and the rest of the coupling method, as mentioned, will be explained later. This gives rise to $2n$-PDE
\begin{align*}
\tr\left\{ \begin{pmatrix}
I&B\\
B^T&I 
\end{pmatrix} D^2 G(x,y)\right\}=0,
\end{align*}
through the Taylor theorem with a certain explicit matrix $B$. On the other hand, starting from a harmonic function $u$ and setting $G(x,y)=u(x)-u(y)$, we obtain a solution to the above PDE. Next, we focus our attention on proving the comparison principles for this $2n$-PDE and on finding  a suitable supersolution that implies the desired regularity through comparison: this is actually the Ishii-Lions regularity method for PDEs.

In the continuous-time case and for uniformly parabolic PDEs, the connection between these different regularity methods is known to experts. For example in \cite[Appendix A]{porrettap13} the authors describe  the connection between the coupling method for continuous time stochastic processes and the Ishii-Lions method, but for different PDEs and processes from the ones covered in this paper.  Here we work in the nonlinear context with the coupling method for the tug-of-war games with noise and the Ishii-Lions regularity method for the $p$-Laplace equation. 
Stochastic games are easier to understand in discrete time, where one can immediately describe concepts such as taking turns. In this paper, we work in discrete time and try to give an accessible exposition of the topic starting with simple cases and dropping the excess terms when explaining the ideas.

For a discussion and further references on the coupling method, one can consult \cite{kusuoka15} and \cite{kusuoka17}. The Ishii-Lions method has been used for $p$-Laplace type equations in \cite{attouchipr17} and \cite{siltakoski21}.

 %%%%%%%%%%%%%%%%%%
\section{Background}
\label{sec:background}

\subsection{Tug-of-war games} In terms of games, this section takes inspiration in part from \cite{parviainen24}. More details can also be found in \cite{peresssw09}, \cite{peress08}, \cite{luirops14}, \cite{manfredipr12}, \cite{blancr19b}, and \cite{lewicka20}. As for PDEs, the reader could consult \cite{lindqvist17} or \cite{heinonenkm06} for example.

\noindent{\bf Random walk on balls:}  
Let $\Omega \subset \mathbb{R}^n$ be a bounded domain. Consider a random walk starting from an initial point $x_0 \in \Omega$. The next point 
$$x_1 \in B_\varepsilon(x_0)$$ 
is randomly selected using the uniform distribution over the open ball $B_\varepsilon(x_0)$, which has  radius $\varepsilon$ centered at $x_0$. This process is repeated, with each step determined in a similar way, until the walk exits the domain $\Omega$. The first point outside the domain is denoted by $x_\tau$, and at this point we assign a payoff $F(x_\tau)$. Then,  the value of the process is denoted by 
\[
\begin{split}
u_{\eps}(x_0):=\mathbb E^{x_0}[F(x_{\tau})],
\end{split}
\]  
where  $E^{x_0}$ denotes the expectation when starting at $x_0$. The value satisfies the dynamic programming principle (DPP)
\begin{align*}
u_{\eps}(x)=\kint_{B_{\eps}(x)}u_{\eps}\ud z:=\frac{1}{\abs{B_{\eps}}}\int_{B_{\eps}(x)}u_{\eps}\ud z.
\end{align*}
This formula can be understood by considering one step and adding/integrating possible outcomes against the corresponding probability measure.
For more details, see, for example, \cite{manfredipr12, luirops14}.

\noindent{\bf Tug-of-war:}  
Next, we introduce two competing players, Player I (PI) and Player II (PII). A token is initially placed at $x_0 \in \Omega \subset \mathbb{R}^n$, a fair coin is tossed and the winner of the toss moves the token to a new point $x_1 \in B_\varepsilon(x_0)$. The game proceeds similarly until the token exits the domain $\Omega$. At the end of the game, PII pays PI an amount determined by the payoff function $F(x_\tau)$, where $x_\tau$ is the first point outside $\Omega$. Thus, PII is the minimizing player and PI the maximizing player. The value of the game is 
\begin{align}
\label{eq:elliptic-value-infinity}
u_\eps(x_0):=\sup_{S_{\I}}\inf_{S_{\II}}\,\mathbb{E}_{S_{\I},S_{\II}}^{x_0}[F(x_\tau)],
\end{align}
as long as one suitably penalizes the sequences that do not end the game.
It can also be shown that  the value satisfies the DPP  \cite{armstrongs12,lius15}
\begin{align*}
u_\varepsilon(x)=\frac{1}{2}\sup_{B_\varepsilon(x)}u_\varepsilon+\frac{1}{2}\inf_{B_\varepsilon(x)}u_\varepsilon.
\end{align*}
This formula can be understood by considering one step and summing up the possible outcomes (either PI or PII wins) with the corresponding probabilities and it 
is related to the game theoretic or normalized infinity Laplace equation
\begin{align*}
0=\Delta^{N}_{\infty}u:=\abs{Du}^{-2}\sum_{i,j=1}^{n} u_{x_ix_j} u_{x_i}u_{x_j}=\abs{Du}^{-2} \langle D^2 uDu,Du \rangle,
\end{align*}
where we assumed where we assumed $Du\neq 0$. Indeed, when $u$ is $C^2$ and $Du\neq 0$, we have 
$$\frac12\left\{
\sup_{z \in B_\eps(x)} u(z) +\inf_{z \in B_\eps(x)} u(z) 
\right\}= u(x)+ \frac{\eps^2}{2} \Delta_\infty^N u(x) + o(\eps^2).
$$
See Theorem 3.5 in \cite{lewicka20}.\par
\noindent{\bf Tug-of-war with noise:}  
 This combines the random walk and the tug-of-war game. A token is initially placed at $x_0 \in \Omega \subset \mathbb{R}^n$, and a biased coin is tossed with probabilities $\alpha$ and $\beta$ (where $\alpha + \beta = 1$, $\beta > 0$, $\alpha \geq 0$). If heads (with probability $\beta$) occur, the next point $x_1 \in B_\varepsilon(x_0)$ is randomly chosen as in the random walk on balls, following the uniform distribution over $B_\varepsilon(x_0)$. 

On the other hand, if tails (with probability $\alpha$) occur, a fair coin is thrown and the winner of the toss moves the token to a new point $x_1 \in B_\varepsilon(x_0)$. The game proceeds similarly until the token exits the domain $\Omega$. At the end of the game, Player II pays Player I an amount determined by the pay-off function $F(x_\tau)$, where $x_\tau$ is the first point outside $\Omega$. Then we define
\begin{align}
\label{eq:elliptic-value}
u_\eps(x_0):=\sup_{S_{\I}}\inf_{S_{\II}}\,\mathbb{E}_{S_{\I},S_{\II}}^{x_0}[F(x_\tau)],
\end{align}
where  $x_0$ is a starting point of the game and $S_{\I},S_{\II}$ are the strategies of the players. By \cite{luirops14}, it is known that the order of $\inf$ and $\sup$ is irrelevant, and thus we say that the game has a value. Also, observe that the game ends almost surely in this case because of noise.  The value satisfies by \cite{luirops14}, the DPP
\begin{equation}
\label{eq:DPP-repeat}
\begin{split}
u_{\eps}(x) =&\frac{\a}{2}\left\{ \sup_{B_{\eps}(x)} u_{\eps} + \inf_{B_{\eps}(x)}u_{\eps}\right\}+ \beta
\kint_{B_{\eps}(0)} u_{\eps}(x+h) \ud h,
\end{split}
\end{equation}  
which can be heuristically understood as considering one step and summing up the possible outcomes with the corresponding probabilities. 

 It is well-known by the references at the beginning of the section that this game approximates viscosity solutions to the normalized or game theoretic $p$-Laplace equation 
\begin{align*}
0=\Delta_{p}^{N} u=\Delta u+(p-2)\Delta_{\infty}^{N} u
\end{align*} 
where we again assume  $Du\neq 0$,
\begin{align*}
    \alpha=\frac{p-2}{p+n},\qquad \beta=\frac{n+2}{p+n}.
\end{align*}
and $p>2$

\subsection{Couplings}
\label{sec:intuition}

The regularity argument sketched in this section has connections to the method of couplings of stochastic processes dating back to the 1986 paper of Lindvall and Rogers \cite{lindvallr86}. In the linear case it is well known in the context of PDEs, see, for example, \cite{porrettap13, kusuoka15}.

\noindent {\bf Random walk:} 
We consider a random walk and observe that we can write the difference as an average value in double variables with respect to many measures obtained by coupling
\begin{align}
\label{eq:2nDPP-p-2}
G(x,y)&:=u_{\eps}(x)-u_{\eps}(y)=\kint_{B_{\eps}(0)}u_{\eps}(x+h) \ud h-\kint_{B_{\eps}(0)}u_{\eps}(y+h) \ud h\nonumber\\
 &=\kint_{B_{\eps}(0)}u_{\eps}(x+h)- u_{\eps}(y+P_{x,y}(h)) \ud h\nonumber\\
 &=\kint_{B_{\eps}(0)}G(x+h, y+P_{x,y}(h)) \ud h, 
\end{align}
where $P_{x,y}$ is any isometry of $B_\eps(0)$. Proceeding 
in this way,  the question about the regularity of $u_{\eps}$
is converted into a question about the \textit{size} of a solution $G$ of \eqref{eq:2nDPP-p-2} in $\Omega\times\Omega\subset \R^{2n}$. 
The DPP in $2n$-dimensions also inherits  the monotonicity property of the $n$-dimensional DPP; the inequality  $G_1(x,y)\ge G_2(x,y)$ implies $$
\kint_{B_{\eps}(0)}G_1(x+h, z+P_{x,y}(h)) \ud h\ge \kint_{B_{\eps}(0)}G_2(x+h, z+P_{x,y}(h)) \ud h.$$

We further have a stochastic interpretation considering a process in $\R^{2n}$ defined as follows: When the process is at $(x,y)\in \Om\times \Om$,  the next point is chosen according to the higher-dimensional probability measure that can be read from the last line of \eqref{eq:2nDPP-p-2}. This measure was obtained by coupling the original probability measures (the uniform measures in $B_{\eps}(x)$ and $B_{\eps}(y)$) through the choice of the isometry $P_{x,y}$. This further allows us to define a value for the $2n$-dimensional process analogously to the $n$-dimensional case for the use of this heuristic discussion. 
Next,  we explain the idea of getting a bound for $|G(x,y)|$ using stochastic intuition in $\R^{2n}$. Observe that $G=0$ in the diagonal set $$ T = \{(x, y): x = y\}.$$
Thus we define the following stopping rules and new payoffs for the same process to define another value:
\begin{enumerate}
\item we stop if we reach $T$, and payoff is zero
\item we stop of either $x$ or $y$ goes outside $B_1\subset \Omega$, and  payoff is  $2\sup |u_{\eps}|$
\end{enumerate}
The value of this process with the above boundary values should evidently be greater than or equal to $G$ since the boundary values defined above are greater than those taken by $G$. 
 
We want to get an upper bound for the other value defined above, and thus for the original value. The favorable case is to reach $x=y$ before $x$ or $y$ goes outside $B_1$, because we have boundary values $0$ there. As we try to reach the target, a good choice for $P_{x,y}(h)$ is the reflection of $h$ with respect to $V^\perp=\operatorname{span}(x-y)^\perp$,  when aiming at $x=y$ in the course of the process. In contrast, choosing, for example, the same vector $h$ instead of the reflection would be a bad choice with this aim.

The alternative analytic perspective involves finding a suitable super-solution $f$ to the multidimensional DPP \eqref{eq:2nDPP-p-2} that provides the required bound for the solution $G$, once a comparison principle has been established. 
We now outline the key steps of this approach; the details can be found in \cite[Sections 3 and 4]{luirop18}.
Consider the test function
\begin{align*}
f(x,y) = C\abs{x-y}^{\delta}.
\end{align*}
We want to show that $G\le f$ in $B_1\times B_1$. Let us assume that
the maximum of the difference $G-f$ is reached at a point $(x_0, y_0)\in B_1\times B_1$. 
It turns out that one can always choose $C$ and modify $f$  so that this can be accomplished. 
Suppose that
\begin{equation}\label{thesis}
    \max_{B_1\times B_1}(G(x,y)-f(x,y))=G(x_0,y_0)-f(x_0,y_0)=\theta >0.
\end{equation}

We conclude that the function $\theta+f(x,y)$
touches $G(x,y)$ from above at the point $(x_0, y_0)$.
We have
\begin{align}
\label{eq:bound-cp}
\left\{ \begin{array}{rcl}
f(x,y)+\theta &\ge&  G(x,y) \text{  in  }B_1\times B_1\\
f(x_0,y_0)+\theta &= & G(x_0,y_0).
\end{array}\right.     
\end{align}
We now apply the monotonicity of the $2n$-DPP to obtain 
\begin{align}
\label{eq:DPP-from-cp}
\begin{array}{rcl}
   \theta & = &  G(x_0,y_0) -f(x_0,y_0)   \\
     & = & \kint_{B_{\eps}(0)}G(x_0+h, y_0+P_{x_0,y_0}(h)) \ud h- f(x_0, y_0)\\
     & \le & \kint_{B_{\eps}(0)}f(x_0+h, y_0+P_{x_0,y_0}(h))  \ud h +\theta-f(x_0,y_0).
\end{array}    
\end{align}
On the other hand, we can prove the following lemma using the precise form of $f$. The lemma contradicts our initial hypothesis \eqref{thesis}. We conclude that $\theta=0$ is so that
$u_\eps(x)-u_\eps(y)=g(x,y)\le f(x,y)= C|x-y|^\delta$.
It turns out that using reflection coupling is a beneficial strategy even in the analytic approach when proving this lemma. 
\begin{lemma}\label{testp2}
Let $x_0, y_0$, $P_{x_0,y_0}$ and $f$ be as above. Then
   \begin{align}
 \label{eq:int-ave-ineq}
\kint_{B_{\eps}(0)}f(x_0+h, y_0+P_{x_0,y_0}(h)) \ud h-f(x_0,y_0)<0.
\end{align}
\end{lemma}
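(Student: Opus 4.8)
The plan is to expand $f(x_0+h,\,y_0+P_{x_0,y_0}(h))$ by Taylor's theorem around $(x_0,y_0)$ and exploit the symmetrization $h\mapsto -h$ together with the special structure of the reflection coupling. Write $v=x_0-y_0\neq 0$ (if $v=0$ then $f(x_0,y_0)=0$ is already the minimum of $f$ and the averaged value is nonnegative, contradicting nothing, so this degenerate case must be ruled out separately — see below — and we may assume $v\neq 0$). With $P=P_{x_0,y_0}$ the reflection across $V^\perp=\operatorname{span}(v)^\perp$, one computes
\begin{align*}
(x_0+h)-(y_0+P(h)) = v + (h - P(h)) = v + 2\,\langle h,\hat v\rangle\,\hat v,
\end{align*}
where $\hat v = v/|v|$. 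Thus the argument of $f$ depends on $h$ only through $t:=\langle h,\hat v\rangle$, and along the axis $\hat v$ the increment is amplified by a factor $2$ while in the orthogonal directions it is unchanged. Hence
\begin{align*}
\kint_{B_{\eps}(0)} f(x_0+h, y_0+P(h))\ud h = \kint_{B_{\eps}(0)} C\,\bigl(|v|+2t\bigr)^{\delta}\,\operatorname{sgn}\text{-free form}\ud h
\end{align*}
— more precisely $\big||v|\hat v + 2t\hat v\big| = \big||v|+2t\big|$, so the integrand is $C\,\big||v|+2t\big|^{\delta}$.

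The key step is then a one-variable convexity/concavity computation. Using the symmetry of $B_\eps(0)$ under $h\mapsto -h$, pair the values at $t$ and $-t$:
\begin{align*}
\tfrac12\Bigl(\big||v|+2t\big|^{\delta} + \big||v|-2t\big|^{\delta}\Bigr) \ \text{ vs. }\ |v|^{\delta}.
\end{align*}
For $\delta<1$ the function $s\mapsto |s|^{\delta}$ is strictly concave on $(0,\infty)$ and, crucially, subadditive-type estimates give $\tfrac12(|a+b|^\delta + |a-b|^\delta) < |a|^\delta$ is \emph{false} in general — rather one needs to track that the spreading happens in a one-dimensional slice against an $(n{-}1)$-dimensional "flat" average, so that the net effect is governed by the second derivative $\delta(\delta-1)|s|^{\delta-2}<0$ being negative and dominating. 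Concretely, I would Taylor-expand to second order: the first-order terms integrate to zero by the $h\mapsto -h$ symmetry, and the second-order term is $\tfrac12\,\eps^2\,c_n\,\partial_t^2\bigl(C|s|^\delta\bigr)\big|_{s=|v|}$ with the amplification producing a factor $4$ on the $\hat v$-$\hat v$ entry, yielding a strictly negative leading term $\propto C\,\delta(\delta-1)|v|^{\delta-2}\eps^2<0$, with the remainder controlled since we may restrict to $|v|$ bounded below — here is where "one can always modify $f$" enters, ensuring $|x_0-y_0|\gtrsim \eps$ at the contact point, so the $o(\eps^2)$ error genuinely beats the main term.

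The main obstacle is exactly this last point: making the second-order Taylor remainder rigorous uniformly. The function $|s|^\delta$ is not $C^2$ at $s=0$, so one cannot naively expand if $|v|$ is comparable to $\eps$; the resolution (as in \cite[Sections 3 and 4]{luirop18}) is that the modification of $f$ — e.g. replacing $C|x-y|^\delta$ by $C|x-y|^\delta - \tilde C|x-y|^2$ or restricting attention to an annular region — forces the maximum point to satisfy $|x_0-y_0|\geq \gamma\eps$ for a fixed $\gamma>0$, and on that range $|s|^\delta$ is smooth with all derivatives comparable to powers of $|v|$, so the remainder is $O(\eps^3|v|^{\delta-3})$ which is $o(\eps^2|v|^{\delta-2})$. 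I would also need to handle the degenerate possibility $x_0=y_0$: there $G(x_0,y_0)=0$ and $f$ attains its strict minimum, so \eqref{thesis} would force $\theta\le 0$, contradicting $\theta>0$; hence that case does not occur and the Taylor argument above applies. Assembling these pieces gives \eqref{eq:int-ave-ineq}.
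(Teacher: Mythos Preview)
Your approach is essentially the same as the paper's: Taylor expand, use the $h\mapsto -h$ symmetry to kill the first-order term, and observe that the second-order contribution is governed by $C\delta(\delta-1)|x_0-y_0|^{\delta-2}<0$. Your explicit reduction to one variable via $(x_0+h)-(y_0+P(h))=v+2\langle h,\hat v\rangle\hat v$ is a slightly cleaner way to see why the orthogonal component drops out---the paper instead carries the full $2n$-dimensional Taylor expansion and then notes $(h_x-h_y)_{V^\perp}=0$ under reflection coupling---but the content is the same. Your handling of the degenerate case $x_0=y_0$ (ruled out by the counterassumption $\theta>0$) also matches the paper exactly.

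Two remarks. First, your digression about whether $\tfrac12(|a+b|^\delta+|a-b|^\delta)<|a|^\delta$ holds is unnecessary: once you have reduced to $C\,||v|+2t|^\delta$ you can simply Taylor-expand, as you eventually do; the direct concavity route you briefly entertain is not needed. Second, your extended discussion of the Taylor remainder and the need for $|x_0-y_0|\gtrsim\eps$ is more careful than the paper, which deliberately suppresses these discretization errors here (writing ``$+\text{error}$'' and remarking after the lemma that such terms are handled by the additional correction to $f$ in \cite{luirop18}). So your concern is legitimate but goes beyond what the lemma as stated in this expository context asks for.
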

%%%%%%%%%%%%%%%%
%
\begin{proof}
We use Taylor's expansion for $f(x,y)=C\abs{x-y}^{\delta}$
\begin{equation}
\label{eq:taylor}
\begin{split}
f&(x+h_x,y+h_y)\\
&=f(x,y)+C\delta|x-y|^{\delta-1}(h_x-h_y)_V\\
&+\frac{C}{2}\delta|x-y|^{\delta-2}\big((\delta-1)(h_x-h_y)^2_V+(h_x-h_y)^2_{V^{\perp}}\big)+\text{error},
\end{split}
\end{equation}
where $+\text{error}$ denotes the usual error term in the Taylor theorem,  $V$ is the space spanned by $x-y$, $(h_x-h_y)_V$  refers to the scalar projection onto $V$ i.e.\ $\langle(h_x-h_y), (x-y)/\abs{x-y}\rangle$, and $(h_x-h_y)_{V^{\perp}}$ onto the orthogonal complement. 
Observe that the above computation is only possible when $f$ is regular enough, that is, $x_0\neq y_0$. This follows from the counterassumption (\ref{thesis}): this is exactly where we use $\theta>0$.

When computing the integral in (\ref{eq:int-ave-ineq}) of $f$, the first-order term produces zero by symmetry. Looking at the second-order terms, we observe that the desired negative output is obtained by the term
$$(\delta-1)(h_x-h_y)^2_V.$$ However, the term $$(h_x-h_y)^2_{V^{\perp}}$$ gives an undesired positive effect. Thus, using reflection gives the desired negative output, and the undesired term vanishes.
\end{proof}

We remark that we have also omitted discretization errors, which would add an additional correction function to $f$ as detailed in \cite[(2.5)]{luirop18}. It also follows that $\theta>0$ is not a sufficient counter-assessment if these errors are taken into account. The actual assumption is \cite[(3.12)]{luirop18}, and there is an explanation on where it is needed right before  estimate (3.15) of the same paper: the point is that in (\ref{eq:DPP-from-cp}) the points might be taken slightly outside the domain, where the functions might jump. Thus, to use (\ref{eq:bound-cp}) the value of $\theta$ also needs to be slightly larger. 
% %%%%%%%

\noindent {\bf Tug-of-war:} 
The following argument is given in detail in Sections 2.3 and 3.1 in \cite{luirop18}. Later this approach was extended to the parabolic setting in \cite{parviainenr16} (despite the difference in publication years).   
Again
 $$
 G(x,y):=u_{\eps}(x)-u_{\eps}(y)
 $$ can be written as a solution of a certain monotone  DPP in $\R^{2n}$:
 \begin{align}\label{eq:DPP-2}
		G&(x,y)=u_\eps(x)-u_\eps(y)\nonumber\\
		&=\frac{1}{2}\,\Bigg\{\,\sup_{\tilde x\in B_\eps(x)}u_\eps(\tilde x)+\inf_{\tilde x\in B_\eps(x)}u_{\eps}(\tilde x)-\sup_{\tilde y\in B_\eps(y)}u_\eps(\tilde y)-\inf_{\tilde y\in B_\eps(y)}u_{\eps}(\tilde y)\,\Bigg\}\nonumber \\
		%%%%%%%
		&=\frac{1}{2}\,\Bigg\{\,\sup_{\tilde x\in B_\eps(x)}u_\eps(\tilde x)-\inf_{\tilde y\in B_\eps(y)}u_{\eps}(\tilde x)\,+\inf_{\tilde x\in B_\eps(x)}u_{\eps}(\tilde y)-\sup_{\tilde y\in B_\eps(y)}u_\eps(\tilde y)\,\Bigg\}\nonumber \\
	&	=\frac{1}{2}\,\Bigg\{\,\sup_{\tilde x\in B_\eps(x), \tilde y\in B_\eps(y)}(u_\eps(\tilde x)-u_{\eps}(\tilde y))\,+\inf_{\tilde x\in B_\eps(x), \tilde y\in B_\eps(y)}(u_{\eps}(\tilde x)-u_\eps(\tilde y))\,\Bigg\}\nonumber \\
				&=\frac{1}{2}\sup_{B_{\eps}(x)\times B_{\eps}(y)}G\,+\,\frac{1}{2}\inf_{B_{\eps}(x)\times B_{\eps}(y)}G\,.
\end{align}
In stochastic terms, what we did above looking at the $\sup$-term on the last and second to last line corresponds to coupling the probability measures so that PI at $x$ wins with probability one simultaneously with PII at $y$, and these simultaneous events form the step of the maximizing player in $\R^{2n}$. 

Similar to (\ref{thesis}), we could consider the $2n$-DPP in (\ref{eq:DPP-2}) at the level of inequality at the maximum point of $u_{\eps}(x)-u_{\eps}(y)-f(x,y)$ to get
\begin{align*}
f(x_0,y_0)&\le \frac{1}{2}\sup_{B_{\eps}(x_0)\times B_{\eps}(y_0)}f\,+\,\frac{1}{2}\inf_{B_{\eps}(x_0)\times B_{\eps}(y_0)}f\,.
\end{align*}
 Then the key property satisfied by the test function $f$ by the explicit computation is the reverse inequality.
\begin{lemma}
Let $x_0, y_0$, $P_{x_0,y_0}$ and $f$ be as above. Then
   $$ \frac{1}{2}\Bigg\{
   \sup_{B_{\eps}(x_0)\times B_{\eps}(y_0)}f\,+\,\inf_{B_{\eps}(x_0)\times B_{\eps}(y_0)}f\Bigg\} -f(x_0,y_0)<0.$$
\end{lemma}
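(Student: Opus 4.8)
The plan is to proceed exactly in the spirit of the previous lemma: choose $f(x,y)=C|x-y|^\delta$ with $\delta\in(0,1)$, take $P_{x_0,y_0}$ to be the reflection across $V^\perp=\operatorname{span}(x_0-y_0)^\perp$, and compute the two extrema of $f$ over the product ball $B_\eps(x_0)\times B_\eps(y_0)$ via a second-order Taylor expansion of $f$ around $(x_0,y_0)$. First I would record that, because the counter-assumption $\theta>0$ places the maximum point strictly off the diagonal, $x_0\neq y_0$ and so $f$ is smooth near $(x_0,y_0)$; this legitimizes the expansion \eqref{eq:taylor}, namely
\begin{align*}
f(x_0+h_x,y_0+h_y)
&=f(x_0,y_0)+C\delta|x_0-y_0|^{\delta-1}(h_x-h_y)_V\\
&\quad+\frac{C}{2}\delta|x_0-y_0|^{\delta-2}\bigl((\delta-1)(h_x-h_y)^2_V+(h_x-h_y)^2_{V^\perp}\bigr)+\text{error},
\end{align*}
with $|h_x|,|h_y|\le\eps$.

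The key observation, as in the tug-of-war DPP \eqref{eq:DPP-2}, is that the supremum and infimum over the \emph{product} ball are achieved by making $h_x-h_y$ as long (resp.\ as short, i.e.\ opposite) as possible along a single direction. Concretely, with reflection coupling one should test $h_x=te$, $h_y=-te$ where $e=(x_0-y_0)/|x_0-y_0|$ and $t$ close to $\eps$: this makes $(h_x-h_y)_V\approx\pm2\eps$ while the $V^\perp$-component is \emph{zero}, so the only surviving second-order contribution is $\frac{C}{2}\delta|x_0-y_0|^{\delta-2}(\delta-1)(2\eps)^2$, which is strictly negative since $\delta<1$. Thus I would show
\begin{align*}
\sup_{B_\eps(x_0)\times B_\eps(y_0)}f&\le f(x_0,y_0)+2C\delta\eps|x_0-y_0|^{\delta-1}+2C\delta(\delta-1)\eps^2|x_0-y_0|^{\delta-2}+\text{error},\\
\inf_{B_\eps(x_0)\times B_\eps(y_0)}f&\le f(x_0,y_0)-2C\delta\eps|x_0-y_0|^{\delta-1}+(\text{nonneg. terms})+\text{error},
\end{align*}
the first line because the reflection choice is \emph{admissible} in the supremum, the second because the opposite reflection choice is admissible in the infimum (for the infimum one only needs an upper bound, so the sign of the $V^\perp$-term on the sup side is irrelevant and one discards it). Averaging these two inequalities, the linear first-order terms cancel and one is left with
\begin{align*}
\frac12\Bigl\{\sup_{B_\eps(x_0)\times B_\eps(y_0)}f+\inf_{B_\eps(x_0)\times B_\eps(y_0)}f\Bigr\}-f(x_0,y_0)\le C\delta(\delta-1)\eps^2|x_0-y_0|^{\delta-2}+\text{error}.
\end{align*}

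The main obstacle — and the one place care is needed — is absorbing the error term and controlling $|x_0-y_0|$ from below. The Taylor remainder is of size $O(C\eps^3|x_0-y_0|^{\delta-3})$, and the leading negative term is $\sim C\delta(1-\delta)\eps^2|x_0-y_0|^{\delta-2}$; so the inequality is strict only when $|x_0-y_0|\gtrsim\eps/(1-\delta)$. One therefore treats the range $|x_0-y_0|\lesssim\eps$ separately: there one exploits that $f$ is small (of order $C\eps^\delta$) together with a crude direct estimate, or one adds the discretization/boundary correction function to $f$ as in \cite[(2.5)]{luirop18} so that the counter-assumption is upgraded from $\theta>0$ to the stronger version \cite[(3.12)]{luirop18}, forcing $|x_0-y_0|$ to stay in the favorable regime. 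Choosing $\delta$ small and $C$ large (depending on $\sup|u_\eps|$, $\operatorname{diam}\Omega$, $n$, $p$) then makes the right-hand side strictly negative, completing the proof. I would carry out the admissibility-of-reflection argument for the $\sup$ and $\inf$ first, then the averaging, then the remainder bookkeeping with the near-diagonal case last.
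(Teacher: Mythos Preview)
There is a genuine gap in the direction of your inequalities. You want an \emph{upper} bound on $\sup_{B_\eps(x_0)\times B_\eps(y_0)}f$, but your justification ``the reflection choice is admissible in the supremum'' only yields a \emph{lower} bound: plugging in a particular $(h_x,h_y)$ tells you $\sup f\ge f(x_0+h_x,y_0+h_y)$, not $\le$. The argument as written therefore does not close. (For the $\inf$ you are fine: testing any point gives an upper bound for $\inf f$, which is what you need. Incidentally, the ``nonneg.\ terms'' you mention for the $\inf$ are actually the same negative second-order term $2C\delta(\delta-1)\eps^2|x_0-y_0|^{\delta-2}$; this only helps, but the description is off.)

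The missing observation---and the one that makes this lemma essentially a one-line computation rather than a Taylor-remainder exercise---is that $f(x,y)=\phi(|x-y|)$ with $\phi(r)=Cr^{\delta}$ strictly increasing. Hence the supremum over $B_\eps(x_0)\times B_\eps(y_0)$ is \emph{attained} precisely where $|x-y|$ is maximal, i.e.\ at the reflection points $h_x=\eps e$, $h_y=-\eps e$ with $e=(x_0-y_0)/|x_0-y_0|$, giving $\sup f=\phi(r+2\eps)$ exactly (with $r=|x_0-y_0|$). Testing the opposite pair gives $\inf f\le \phi(r-2\eps)$ for $r>2\eps$. Then strict concavity of $\phi$ yields
\[
\tfrac12\bigl(\phi(r+2\eps)+\phi(r-2\eps)\bigr)<\phi(r),
\]
which is exactly the claim, with no Taylor remainder to absorb and no separate near-diagonal regime to treat. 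Your Taylor expansion is just a second-order approximation to this concavity inequality; it is not wrong once the $\sup$ is correctly identified, but it introduces the $O(\eps^3|x_0-y_0|^{\delta-3})$ error and the case split $|x_0-y_0|\lesssim\eps$ unnecessarily. This is the approach taken in \cite[Section~3.1]{luirop18}, to which the paper refers.
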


\noindent {\bf Tug-of-war with noise:}
In the case of the tug-of-war with noise analogous argument leads to the monotone $2n$-DPP
\begin{align*}
G(x,y)&=u(x)-u(y)\\
 &=\frac{\a}{2}\left\{ \sup_{B_{\eps}(x)} u + \inf_{B_{\eps}(x)}u \right\}+ \beta
\kint_{B_{\eps}(0)} u (x+h) \ud h\\
&\hspace{1 em}-\frac{\a}{2}\left\{ \sup_{B_{\eps}(y)} u + \inf_{B_{\eps}(y)}u \right\}-\beta
\kint_{B_{\eps}(0)} u (y+h) \ud h\\
&=
\frac{\a}{2}\left\{\sup_{B_{\eps}(x)\times B_{\eps}(y)}G\,+\inf_{B_{\eps}(x)\times B_{\eps}(y)}G\right\}\\
&\hspace{6 em}+\beta\kint_{B_{\eps}(0)} G(x+h,y+P_{x,y}(h)) \ud h.
\end{align*}
However, to obtain a monotone $2n$-DPP as a simple combination of the previous DPPs, we restrict ourselves to the case $p\ge 2$. Similar arguments as above also work. Details can be found in Section 5 of \cite{luirop18}.

\noindent {\bf General coupling formalism and examples:} We could write the couplings in more general terms for suitable given spaces of measures, $\mathcal{A}_1(x)$ and $\mathcal{A}_2(x)$ containing the rules of the games for each player, as
\begin{equation}
\label{DPP}
u(x) =\,\frac{1}{2}\sup_{\mu^{x}_{\I}\in\mathcal{A}_1(x)}\int_{\Rn}u(z)\,d\mu^{x}_{\I}(z)\,
+\frac{1}{2}\inf_{\mu^{x}_{\II}\in\mathcal{A}_2(x)}\int_{\Rn}u(z)\,d\mu^{x}_{\II}(z)\,.
\end{equation} 
Then we can write
\begin{align*}
u(x)-u(y)&=\frac{1}{2}\sup_{\mu^{x}_{\I}\in\mathcal{A}_1(x)}\int_{\Rn}u(z)\,d\mu^{x}_{\I}(z)\,
+\frac{1}{2}\inf_{\mu^{x}_{\II}\in\mathcal{A}_2(x)}\int_{\Rn}u(z)\,d\mu^{x}_{\II}(z)\,
\\
&\hspace{1 em}-\frac{1}{2}\sup_{\mu^{y}_{\I}\in\mathcal{A}_1(y)}\int_{\Rn}u(z)\,d\mu^{y}_{\I}(z)\,
-\frac{1}{2}\inf_{\mu^{y}_{\II}\in\mathcal{A}_2(y)}\int_{\Rn}u(z)\,d\mu^{y}_{\II}(z)\,\\
&=\sup_{\mu^{x}_{\I}\in\mathcal{A}_1(x)}\inf_{\mu^{x}_{\II} \in\mathcal{A}_2(x)}\sup_{\mu^{y}_{\I} \in\mathcal{A}_1(y)}\inf_{\mu^{y}_{\II} \in\mathcal{A}_2(y)}\\
&\hspace{3 em}\bigg(\int_{\Rn}u(z)\,d\left(\frac{\mu^{x}_{\I}+\mu^{x}_{\II}}{2}\right)(z)\,-\int_{\Rn}u(z)\,d\left(\frac{\mu^{y}_{\I}+\mu^{y}_{\II}}{2}\right)(z)\bigg).
\end{align*}
Let $\Gamma\left(\frac{\mu^{x}_{\I}+\mu^{x}_{\II}}{2},\frac{\mu^{y}_{\I}+\mu^{y}_{\II}}{2}\right)$ denote the class of coupled measures whose marginals are $\frac{\mu^{x}_{\I}+\mu^{x}_{\II}}{2}$ and $\frac{\mu^{y}_{\I}+\mu^{y}_{\II}}{2}$. Then for any coupled measure 
\[
\begin{split}
\mu:=\mu_{\half(\mu^{x}_{\I}+\mu^{x}_{\II}),\half(\mu^{y}_{\I}+\mu^{y}_{\II})}\in \Gamma\left(\frac{\mu^{x}_{\I}+\mu^{x}_{\II}}{2},\frac{\mu^{y}_{\I}+\mu^{y}_{\II}}{2}\right), 
\end{split}
\]
 we have
\[
\begin{split}
\int_{\Rn}u(z)\,d\left(\frac{\mu^{x}_{\I}+\mu^{x}_{\II}}{2}\right)(z)\,-&\int_{\Rn}u(z)\,d\left(\frac{\mu^{y}_{\I}+\mu^{y}_{\II}}{2}\right)(z)\\
=&\int_{\R^{2n}}\big(u(z)-u(z')\big)\,d\mu(z,z').
\end{split}
\]
Denoting
\[
\begin{split}
G(x,y):=u(x)-u(y),
\end{split}
\]
we get  the monotone $2n$-DPP 
\begin{equation*}
\label{eq:dpp-2n}
\begin{split}
G(x,y)=&\sup_{\mu^{x}_{\I}\in\mathcal{A}_1(x)}\inf_{\mu^{x}_{\II}\in\mathcal{A}_2(x)}\sup_{\mu^{y}_{\I}\in\mathcal{A}_1(y)}\inf_{\mu^{y}_{\II}\in\mathcal{A}_2(y)} \bigg(\int_{\R^{2n}}G(z,z')\,d\mu(z,z')\,\bigg).
\end{split}
\end{equation*}
So for example in the case of the tug-of-war
\begin{align*}
\frac{\mu^{x}_{\I}+\mu^{x}_{\II}}{2}=\frac{1}{2}\delta^{x}_{\I} + \frac{1}{2}\delta^{x}_{\II}, \quad 
\frac{\mu^{y}_{\I}+\mu^{y}_{\II}}{2}=\frac{1}{2}\delta^{y}_{\I} + \frac{1}{2}\delta^{y}_{\II}.  
\end{align*}
The computation (\ref{eq:DPP-2}) leads to measures
\begin{align*}
\half(\mu_{\mu^{x}_{\I},\mu^{y}_{\II}}+\mu_{\mu^{x}_{\II},\mu^{y}_{\I}}),\quad 
\mu_{\mu^{x}_{\I},\mu^{y}_{\II}}\in \Gamma(\mu^{x}_{\I},\mu^{y}_{\II}),\quad 
\mu_{\mu^{x}_{\II},\mu^{y}_{\I}}\in \Gamma(\mu^{x}_{\II},\mu^{y}_{\I}),
\end{align*}
but since
\begin{align*}
\half(\mu_{\mu^{x}_{\I},\mu^{y}_{\II}}+\mu_{\mu^{x}_{\II},\mu^{y}_{\I}})\in \Gamma\left(\frac{\mu^{x}_{\I}+\mu^{x}_{\II}}{2},\frac{\mu^{y}_{\I}+\mu^{y}_{\II}}{2}\right)
\end{align*}
this is consistent.
Typically, when starting from a game in $\mathbb{R}^n$, several different games in $\mathbb{R}^{2n}$ can be derived. We must then select the game that best suits our purposes.

%%%%%%%%%%
\begin{example}
A further example of coupling is given by the global copying argument used, for example, by Peres and Sheffield in Theorem 1.2(i) of \cite{peress08} or \cite{lewicka20} p.106, see also its analytic counterpart in the proof of Lemma 4.6 \cite{manfredipr12}. The purpose of the argument is to prove global regularity once the regularity is known close to the boundary by ensuring that the game paths started at different points keep their distance until close to the boundary by \lq\lq replicating the path\rq\rq.

First, a player of the game starting at a point can mimic a strategy (i.e.\ the steps) of a player in a game at the other point. This is not a coupling but a choice of strategies and can be justified by the estimate 
\begin{align*}
u_\eps(x)-u_\eps(y)&=\sup_{S_{\I}}\inf_{S_{\II}}\,\mathbb{E}_{S_{\I},S_{\II}}^{x}[F(x_{\tau})]-\sup_{S_{\I}}\inf_{S_{\II}}\,\mathbb{E}_{S_{\I},S_{\II}}^{y}[F(y_{\tau})]\\
&\le \inf_{S_{\II}}\,\mathbb{E}^{x}_{S^{x}_{\I},S_{\II}}[F(x_{\tau})]-\sup_{S_{\I}}\mathbb{E}^{y}_{ S_{\I},S^{y}_{\II}}[F(y_{\tau})]+2\eta\\
&\le  \,\mathbb{E}^{x}_{S^{x}_{\I},\ol S^{x}_{\II}}[F(x_{\tau})]-\mathbb{E}^{y}_{\ol S^{y}_{\I},S^{y}_{\II}}[F(y_{\tau})]+2\eta,
\end{align*}
where $\ol S^{x}_{\II}$ is a translation/copy of $S^y_{\II}$, and $\ol S^{y}_{\I}$ is a translation/copy of $S^{x}_{\I}$. To be precise,  in the first inequality, we should also replace $\tau$ by $\tau^*\le \tau$ so that we stop whenever one of the points gets close enough to the boundary. In fact, such a change in stopping time can be made rigorously as in \cite[Lemma 2.2]{luirops13}, but we omit it here. 

Next we couple the coin tosses so that steps from $\ol S^{x}_{\II}$ and $S^y_{\II}$ as well as from $\ol S^{y}_{\I}$ and $S^{x}_{\I}$ always occur simultaneously. In more formal terms, we use 
\begin{align*}
\half(\mu_{\mu^{x}_{\I},\mu^{y}_{\I}}+\mu_{\mu^{x}_{\II},\mu^{y}_{\II}}),
\quad \mu_{\mu^{x}_{\I},\mu^{y}_{\I}}\in \Gamma(\mu^{x}_{\I},\mu^{y}_{\I}),
\quad \mu_{\mu^{x}_{\II},\mu^{y}_{\II}}\in \Gamma(\mu^{x}_{\II},\mu^{y}_{\II}),
\end{align*}
where each of the measures contains the action of a player and random noise, and we use the above coupling for the parts arising from the players/strategies.
In the case of the random noise part, the coupled measure is indicated by the integral
\begin{align*}
 &\kint_{B_{\eps}(0)}G(x+h, y+P_{x,y}(h)) \ud h,
\end{align*}
where $P_{x,y}(h)=h$ is the identity coupling.

\end{example}

\section{Laplacian and Random Walk}\label{sec:laplacian}

In this section, we work out the connection between the different regularity methods in the context of the Laplacian and the random walk. More precisely, we work out the connection between the method of couplings for the random walk and the Ishii-Lions method for the Laplacian. 

As explained in the previous section, the coupling methods leads $2n$-DPPs with stochastic intuition at the background. 
The plan is first to connect the coupling method to $2n$-dimensional PDEs ($2n$-PDEs) and their maximum principles in Section~\ref{sec:DPP-PDE}.
Later in Section~\ref{sec:pde-ishii-lions}, we work out the connection between the comparison proof for the $2n$-PDE and Ishii-Lions method, thus completing the picture.

\subsection{$2n$-DPP to  $2n$-PDE}
\label{sec:DPP-PDE}
Above in (\ref{eq:2nDPP-p-2}) we derived a $2n$-DPP for random walks
\begin{align}
\label{eq:int-ave}
G(x,y)=\kint_{B_{\eps}(0)}G(x+h,y+P_{x,y}(h))\,dh
\end{align} 
where $P_{x,y}$ is reflection that is 
\begin{align}
\label{eq:nota}
P_{x,y}(h)&:=\left(I-2\frac{x-y}{\abs{x-y}}\otimes \frac{x-y}{\abs{x-y}}\right)h\\
&=:(I-2 \, \eta\otimes \eta)h\nonumber\\
&=:(I-2P)h,\nonumber
\end{align}
where
$x\otimes y$ denotes a matrix with entries $x_iy_j$, and $I$ is an identity matrix.
Next, we derive the corresponding equation. Heuristically, take the Taylor expansion 
\begin{align*}
 G(x+h_x,y+h_y)=&G(x,y)+\left\langle\begin{pmatrix}
D_x G(x,y)\\
D_y G(x,y)
\end{pmatrix},\, \begin{pmatrix}
h_x\\
h_y 
\end{pmatrix}\right \rangle
\\
&+\half \left\langle \begin{pmatrix}
D_{xx}G(x,y)&D_{xy}G(x,y)\\
D_{yx}G(x,y)&D_{yy}G(x,y) 
\end{pmatrix} \begin{pmatrix}
h_x\\
h_y 
\end{pmatrix}, \begin{pmatrix}
h_x\\
h_y 
\end{pmatrix} \right \rangle+\text{error}. 
\end{align*}
At this point $G$ is any smooth function and 
\begin{align*}
&\left\langle \begin{pmatrix}
D_{xx}G&D_{xy}G\\
D_{yx}G&D_{yy}G 
\end{pmatrix} \begin{pmatrix}
h_x\\
h_y 
\end{pmatrix}, \begin{pmatrix}
h_x\\
h_y 
\end{pmatrix} \right\rangle\\
&\hspace{5 em}= \langle D_{xx}G\,  h_x,h_x\rangle +\langle D_{xy}G\,  h_y,h_x\rangle  + \langle D_{yy}G\,  h_y,h_y\rangle\\
&\hspace{5 em}=\sum_{i,j=1}^n G_{x_ix_j}(h_x)_i(h_x)_j+2\, G_{x_iy_j}(h_x)_i(h_y)_j+G_{y_iy_j}(h_y)_i(h_y)_j
\end{align*}
where $G_{x_iy_j}:=G_{x_iy_j}(x,y)$ stands for partial derivatives.
Next,  observe that 
\begin{align*}
\kint_{B_{\eps}(0)}\sum_{i,j=1}^{n}G_{x_ix_j}(h_x)_i(h_x)_j\ud h=\frac{\eps^2}{n+2}\Delta G
\end{align*} 
since the mixed terms vanish by symmetry and the remaining terms can be evaluated using spherical coordinates (this computation is done in detail in \cite[p.13]{parviainen24}).
Similarly using spherical coordinates and  the formula
\begin{align*}
\kint_{B_{\eps}(0)} \langle Ah,Bh \rangle \ud h &=\kint_{B_{\eps}(0)} \langle B^TAh,h \rangle \ud h\\
&=\frac{\eps^2}{n+2} \tr(B^T A)=\frac{\eps^2}{n+2} \tr( A^T B)
\end{align*}
we obtain
\begin{align*}
\kint_{B_{\eps}(0)} &\left \langle D_{xy}G \,h,\left(I-2\frac{x-y}{\abs{x-y}}\otimes \frac{x-y}{\abs{x-y}}\right)h \right\rangle \ud h \\
&\hspace{5 em}=\frac{\eps^2}{n+2} \tr\left\{ D_{xy}G  \left(I-2\frac{x-y}{\abs{x-y}}\otimes \frac{x-y}{\abs{x-y}}\right)\right\}.
\end{align*}
For other terms the computation is similar and we obtain recalling the shorthand notation (\ref{eq:nota}) that
\begin{align}
\label{eq:stoch-motiv}
\kint_{B_{\eps}(0)}&G(x+h,y+P_{x,y}(h))\,dh\\
&=G(x,y)+\frac{\eps^2}{2(n+2)}\tr\left\{ \begin{pmatrix}
I&I-2P\\
I-2P&I 
\end{pmatrix} D^2 G(x,y)\right\}+\text{error},\nonumber
\end{align}
so the coupling method should correspond to the PDE operator above. We have arrived to the corresponding $2n$-PDE
\begin{equation}\label{eq:2npdelaplacian}
\tr\left\{ \begin{pmatrix}
I&I-2P\\
I-2P&I 
\end{pmatrix} D^2 G(x,y)\right\}=0.
\end{equation}

\subsection{2n-PDE and Ishii-Lions method}
\label{sec:pde-ishii-lions}
In this section, we consider harmonic functions
\begin{align*}
   \Delta u=0 \qquad \text{in } B_1. 
\end{align*}
Our first aim, after observing, that 
$u(x)-u(y)$ is a solution to the $2n$-PDE (\ref{eq:2npdelaplacian}),  is to explain how the comparison result is established for such solutions under the smoothness assumption giving 
\begin{align}
\label{eq:comparison-intro}
    u(x)-u(y)\le f(x,y)=C\abs{x-y}^{\delta}.
\end{align}
A similar argument from below would then finish the proof of H\"older continuity.
After this, we explain the connection between this $2n$-PDE proof and the Ishii-Lions method. We also consider the Ishii-Lions method without the smoothness assumptions in the context of viscosity solutions.

To establish (\ref{eq:comparison-intro}), suppose,  seeking  a contradiction,  that 
\begin{align*}
\max_{B_1\times B_1}(u(x)-u(y)-f(x,y))=u(x_0)-u(y_0)-f(x_0,y_0)=:w(x_0,y_0)>0,
\end{align*}
and assume that $(x_0, y_0)$ is an interior local maximum. It turns out that by modifying $f$, we could guarantee such an assumption, but for expository reasons, we postpone this modification to Theorem~\ref{thm:ishii-lions}.
By smoothness, we immediately have at the maximum point that
\begin{align}
\label{eq:max} \nonumber
\begin{pmatrix}
D_{xx}^2u(x_0)&0\\
0&-D_{yy}^2 u(y_0) 
\end{pmatrix}&-\begin{pmatrix}
D_{xx}f(x_0,y_0)&D_{xy}f(x_0,y_0)\\
D_{yx}f(x_0,y_0)&D_{yy}f(x_0,y_0) 
\end{pmatrix}
\\
&=D^{2}w(x_0,y_0)
\le 0, 
\end{align}
giving 
\begin{align*}
\tr\left\{D^2 w(x_0,y_0)\right\}\le  0.
\end{align*}
As from the equation $\Delta  u=0$ we get 
\begin{align*}
\tr\left\{ \begin{pmatrix}
D_{xx}^2u(x_0)&0\\
0&-D_{yy}^2 u(y_0) 
\end{pmatrix}\right\}=0
\end{align*}
one might hope that also 
\begin{align}
\label{eq:wrong}
\tr\left\{\begin{pmatrix}
D_{xx}f(x_0,y_0)&D_{xy}f(x_0,y_0)\\
D_{yx}f(x_0,y_0)&D_{yy}f(x_0,y_0) 
\end{pmatrix}\right\}
\end{align}
would be strictly negative and this would give
\begin{align*}
\tr\left\{D^2 w(x_0,y_0)\right\}>0,
\end{align*}
leading to a contradiction. So we would not have a positive maximum point. However, (\ref{eq:wrong}) is not strictly negative as verified in (\ref{eq:wrong-comp}) below. 

Instead,  we  argue
as follows. 
Recall $G(x,y)=u(x)-u(y)$ where $u$ is a smooth solution to $\Delta u=0$. From the expression
\begin{align*}
D^2 G(x,y)=\begin{pmatrix}
D^2_{xx}u(x)&0\\
0& -D^{2}_{yy}u(y)
\end{pmatrix}
\end{align*}
we deduce
\begin{align}
\label{eq:2n-eq}
\tr\left\{ \begin{pmatrix}
I&B\\
B^T&I 
\end{pmatrix} D^2 G(x,y)\right\}=0
\end{align}
for an arbitrary matrix $B$ that we will choose satisfying the additional condition   
\begin{align}
\label{eq:additional}
    \begin{pmatrix}
I&B\\
B^T&I 
\end{pmatrix}\ge 0.
\end{align}
This additional condition is required to utilize \eqref{eq:max} i.e.\ $D^2 w(x_0,y_0)=D^2G(x_0,y_0)-D^2f(x_0,y_0)\le 0$ so that
\begin{align*}
 \tr\left\{ \begin{pmatrix}
I&B\\
B^T&I 
\end{pmatrix} D^2 w(x_0,y_0)\right\}\le 0,
\end{align*}
which can be seen for example using the spectral decomposition for the coefficient matrix.
Combining this with (\ref{eq:2n-eq})
we obtain
\begin{align}
0\le
\tr\left\{ \begin{pmatrix}
I&B\\
B^T&I 
\end{pmatrix} D^2 f(x_0,y_0)\right\}.
\end{align}
If we can find a matrix $B$ such that (\ref{eq:additional}) holds and 
\begin{align}
\label{eq:key}
\tr\left\{ \begin{pmatrix}
I&B\\
B^T&I 
\end{pmatrix} D^2 f(x_0,y_0)\right\}<0
\end{align}
we would get a contradiction. We need the following
\begin{align}
\label{eq:DD}
Df(x_0,y_0)&=C\delta\abs{x_0-y_0}^{\delta-2}(x_0-y_0,-(x_0-y_0)),\nonumber\\
 D^2f(x_0,y_0)&=\begin{pmatrix}
M&-M\\
-M&M 
\end{pmatrix},
\end{align}
with 
\begin{align*}
M&=C\delta\abs{x_0-y_0}^{\delta-2}\Big((\delta-2)\frac{x_0-y_0}{\abs{x_0-y_0}}\otimes \frac{x_0-y_0}{\abs{x_0-y_0}}+I\Big)
\\
&=C\delta\abs{x_0-y_0}^{\delta-2}\Big((\delta-2)\eta\otimes\eta+I\Big)\\
&=C\delta\abs{x_0-y_0}^{\delta-2}\Big((\delta-2)P+I\Big),
\end{align*}
recalling the shorthands $P =\eta\otimes\eta$,
where $\eta=\frac{x_0-y_0}{|x_0-x_0|}$.
Now, motivated by (\ref{eq:stoch-motiv}),  we use the coefficient matrix there and obtain
\begin{align}
\label{eq:key-comp-2}
\tr&\left\{ \begin{pmatrix}
I&I-2P\\
I-2P&I 
\end{pmatrix} D^2 f(x_0,y_0)\right\}\\
&=\tr\left\{ \begin{pmatrix}
I&I-2P\\
I-2P&I 
\end{pmatrix} \begin{pmatrix}
M&-M\\
-M&M 
\end{pmatrix} \right\}\nonumber\\
&=4 \tr\{PM\}\nonumber\\
&=C\delta\abs{x_0-y_0}^{\delta-2}4 \tr\{(\eta\otimes \eta)\big((\delta-2)\eta\otimes \eta+I\big)\}\nonumber\\
&=4C\delta\abs{x_0-y_0}^{\delta-2}(\delta-1)<0,\nonumber
\end{align}
a contradiction. 
Also, observe that the coefficient matrix used above is positive semidefinite, as shown after (\ref{eq:eigenvalues}). If instead, we would try to use (\ref{eq:wrong}), we would get
\begin{align}
\label{eq:wrong-comp}
    \tr\left\{D^2 f(x_0,y_0)\right\}=2\tr\{M\}=2C\delta\abs{x_0-y_0}^{\delta-2}(\delta-2+n)
\end{align}
and no contradiction would arise.

Next, we recall the Ishii-Lions method for viscosity solutions \cite{ishiil90}. The modification of the comparison function is also presented.  
After the proof, we explain how this connects with the comparison principle for $2n$-PDEs that was discussed above.
%%%%%%%%%%%%%%%%%%%%%%%%%%%
\begin{theorem}[Ishii-Lions]
\label{thm:ishii-lions}
Let $u\in C(\ol B_1(0))$ be a continuous viscosity solution to $\Delta u=0$.
Then for $\delta\in (0,1)$ there is $C>0$ such that
\[
\begin{split}
\abs{u(x)-u(y)}\le C\abs{x-y}^{\delta} \text{ for all } x,y\in B_{1/4}(0).
\end{split}
\]

\end{theorem}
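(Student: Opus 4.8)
The plan is to run the Ishii–Lions doubling-of-variables argument, mirroring the smooth $2n$-PDE computation carried out above but now at the level of viscosity solutions, so that no a priori regularity of $u$ is assumed. Fix $\delta\in(0,1)$ and work on $B_{1/4}(0)$. The first step is to set up the auxiliary function: instead of the bare comparison function $f(x,y)=C|x-y|^\delta$ we use $\Phi(x,y)=u(x)-u(y)-C|x-y|^\delta-L(|x-x_*|^2+|y-x_*|^2)$ with a fixed center $x_*\in B_{1/4}(0)$ and a large constant $L$, chosen so that the supremum of $\Phi$ over $\ol B_1\times\ol B_1$ is attained at an interior point $(x_0,y_0)$ and, provided $C$ is large enough, one cannot have $x_0$ or $y_0$ near $\partial B_1$ (the quadratic penalization forces the maximizer to stay near $x_*$; standard bookkeeping with $\|u\|_\infty$ gives the admissible range of $C$). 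Arguing by contradiction, suppose $u(x_*)-u(y)>C|x_*-y|^\delta$ for some $y$; then $\sup\Phi>0$ and the maximum point $(x_0,y_0)$ has $x_0\neq y_0$, which is exactly the place where the singular function $|x-y|^\delta$ is smooth — this is the analogue of the role played by $\theta>0$ in Lemma~\ref{testp2}.

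The second step is to apply the Theorem on Sums (Crandall–Ishii lemma) to the functions $u(x)$ and $-u(y)$ at $(x_0,y_0)$ against the smooth test function $C|x-y|^\delta+L(|x-x_*|^2+|y-x_*|^2)$. This yields symmetric matrices $X,Y$ with $(D_x\varphi(x_0,y_0),X)\in\ol J^{2,+}u(x_0)$ and $(-D_y\varphi(x_0,y_0),Y)\in\ol J^{2,-}u(y_0)$, together with the matrix inequality
\begin{equation*}
\begin{pmatrix} X&0\\ 0&-Y\end{pmatrix}\le D^2\varphi(x_0,y_0)+\frac{1}{\iota}\big(D^2\varphi(x_0,y_0)\big)^2
\end{equation*}
for any $\iota>0$, where $D^2\varphi(x_0,y_0)=\begin{pmatrix}M&-M\\ -M&M\end{pmatrix}+2LI_{2n}$ with $M$ exactly as in \eqref{eq:DD}. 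Since $u$ is a viscosity solution of $\Delta u=0$, the gradients are nonzero (as $x_0\neq y_0$, $D_x\varphi=C\delta|x_0-y_0|^{\delta-2}(x_0-y_0)+2L(x_0-x_*)\neq 0$ after possibly adjusting $L$ small relative to $C$; alternatively one keeps $L$ and notes the Laplacian is defined without gradient restriction), so $\tr X\ge 0$ and $\tr Y\le 0$, hence $\tr X-\tr Y\ge 0$.

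The third step is the contradiction: multiply the matrix inequality by the positive semidefinite matrix $\begin{pmatrix}I&I-2P\\ I-2P&I\end{pmatrix}$ (positivity checked via the eigenvalue computation referenced after \eqref{eq:eigenvalues}) and take traces. The left side gives $\tr X-\tr Y\ge 0$ because $\tr\{(I-2P)(X\text{-}Y\text{ cross terms})\}$ vanishes on the block-diagonal matrix $\mathrm{diag}(X,-Y)$, leaving $\tr X-\tr Y$. The right side, by the computation \eqref{eq:key-comp-2}, contributes $4\tr\{PM\}+2L\cdot\tr\{\text{(coeff. matrix)}\}=4C\delta|x_0-y_0|^{\delta-2}(\delta-1)+cL$ plus the $O(1/\iota)$ term coming from $(D^2\varphi)^2$, which is $O(1/\iota)\cdot(C^2|x_0-y_0|^{2\delta-4}+L^2)$. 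Since $\delta-1<0$ the leading term is strictly negative; one first fixes $L$ small, then $C$ large, then $\iota$ large, so that $4C\delta|x_0-y_0|^{\delta-2}(\delta-1)$ dominates, forcing $0\le\tr X-\tr Y<0$, a contradiction. Therefore $u(x_*)-u(y)\le C|x_*-y|^\delta$ for all $y$; since $x_*$ was arbitrary in $B_{1/4}$ and the argument is symmetric in $x,y$, we get $|u(x)-u(y)|\le C|x-y|^\delta$ on $B_{1/4}(0)$.

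The main obstacle is the careful balancing of the three parameters $L$, $C$, $\iota$ together with the dependence of $|x_0-y_0|$ on them: one must ensure the negative term $C\delta(\delta-1)|x_0-y_0|^{\delta-2}$ genuinely beats both the perturbation terms $O(L)$ and $O(\iota^{-1}(C^2|x_0-y_0|^{2\delta-4}+L^2))$ and that the maximizer $(x_0,y_0)$ stays interior with $x_0\neq y_0$; this is precisely the delicate choice that replaces the "$\theta>0$ and modify $f$" step of the coupling argument, and it is where all the bookkeeping concentrates.
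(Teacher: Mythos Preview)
Your proposal is essentially correct and follows the same Ishii--Lions strategy as the paper: double variables, add a quadratic localization to force an interior maximum, apply the theorem on sums, and derive a contradiction from the negativity of the second-order term of $|x-y|^\delta$ in the direction $\eta=(x_0-y_0)/|x_0-y_0|$. Two cosmetic differences: you penalize symmetrically in $x$ and $y$ (the paper penalizes only in $x$, with $\ol f(x,y)=C|x-y|^\delta+2|x-z_0|^2$), and you multiply the matrix inequality by the full coefficient matrix $\begin{pmatrix}I&I-2P\\ I-2P&I\end{pmatrix}$ to extract the trace directly, whereas the paper tests \eqref{eq:thm-sum-est} with the vectors $(\xi,\xi)^T$ and $(\xi,-\xi)^T$ separately. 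The paper explicitly remarks, right after the proof, that your route is an equivalent alternative that ``would have given a more direct access'' to the $2n$-PDE viewpoint.

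One point needs fixing. You describe $L$ both as ``large'' (for localization) and as something you ``first fix small'' (for the trace estimate); these cannot both hold. After normalizing $0\le u\le 1$, the localization requires $L|x_0-x_*|^2<1$ while $|x_0-x_*|\ge 3/4$ on $\partial B_1$, so you need roughly $L\ge 2$, not $L$ small. The resolution is that $L$ is simply fixed (the paper takes $L=2$); the resulting error term $cL=4nL$ on the right side is then a bounded constant, and since $|x_0-y_0|\le 2$ gives $|x_0-y_0|^{\delta-2}\ge 2^{\delta-2}$, the term $4C\delta(\delta-1)|x_0-y_0|^{\delta-2}$ dominates once $C$ is large. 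So the correct order is: fix $L$ (at, say, $2$), then choose $C$ large, then choose $\iota$ large (in fact proportional to $C|x_0-y_0|^{\delta-2}$, exactly as the paper's choice of $\mu$). With that correction your bookkeeping closes.
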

%%%%%%%%%%%%%%%%%%%%%%%%
%%%%%%%%%%%%%%%%%%%%%%%%
\begin{proof}
By considering $(u-\inf_{B_1} u)/(\sup u_{B_1}-\inf_{B_1} u)$, we may assume that
\[
\begin{split}
0\le u \le 1.
\end{split}
\]
Choose $z_0\in B_{1/4}$ and set 
 $$
 \ol f(x,y):=f(x,y)+2\abs{x-z_0}^2:=C\abs{x-y}^\delta+2\abs{x-z_0}^2,
 $$
 where the purpose of the second term is to guarantee the interior maximum below.
  We want to show that $u(x)-u(y)-\ol f(x,y)\le 0$, and thriving for a contradiction  suppose that there is $\theta>0$ and $x_0,y_0\in B_1$ such that
 \[
\begin{split}
u(x_0)-u(y_0)-\ol f(x_0,y_0)=\sup_{(x,y)\in \ol B_1\times \ol B_1}(u(x)-u(y)-\ol f(x,y))&= 
 \theta>0.
\end{split} 
\]
We immediately observe that $x_0 \neq z_0$ by the counter assumption.
Also $(x_0,z_0)\notin \partial (B_1\times B_1)$ since if $(x_0,z_0)\in \partial (B_1\times B_1)$
\[
\begin{split}
C\abs{x_0-y_0}^\delta+2\abs{x_0-z_0}^2\ge 1,
\end{split}
\]
for $C$ large enough.

  Thus we may use the theorem on sums (see for example \cite[Lemma 3.6]{koike04}, or \cite[Theorem 3.3.2]{giga06}) to obtain symmetric matrices $X$ and $Y$ such that
\[
\begin{split}
(D_xf(x_0,y_0),X)&\in \ol J^{2,+}(u(x_0)-2\abs{x_0-z_0}^2),\\
(-D_yf(x_0,y_0),Y)&\in \ol J^{2,-}u(y_0)
\end{split}
\]
i.e.\ by \cite[Proposition 2.7]{koike04}
\[
\begin{split}
(D_xf(x_0,y_0)+4(x_0-z_0),X+4I)&\in \ol J^{2,+}u(x_0),\\
(-D_yf(x_0,y_0),Y)&\in \ol J^{2,-}u(y_0),
\end{split}
\]
with the estimate
\begin{equation}
\label{eq:thm-sum-est}
\begin{split}
\begin{pmatrix}
X&0\\
0&-Y 
\end{pmatrix}\le D^2 f(x_0,y_0)+\frac{1}{\mu} (D^2 f(x_0,y_0))^2. 
\end{split}
\end{equation}
Recalling (\ref{eq:DD}), we also observe
\[
\begin{split}
(D^2f(x_0,z_0))^2=2\begin{pmatrix}
M^2&-M^2\\
-M^2&M^2 
\end{pmatrix}.
\end{split}
\]
where 
\[
\begin{split}
&M^2=C^2\delta^2\abs{x_0-y_0}^{2(\delta-2)}
\\&\hspace{2 em}\cdot\Big((\delta-2)^2\frac{x_0-y_0}{\abs{x_0-y_0}}\otimes \frac{x_0-y_0}{\abs{x_0-y_0}}+2(\delta-2)\frac{x_0-z_0}{\abs{x_0-y_0}}\otimes \frac{x_0-y_0}{\abs{x_0-y_0}}+I\Big)\\
&=C^2\delta^2\abs{x_0-y_0}^{2(\delta-2)} \Big(\underbrace{(\delta^2-4\delta+4+2\delta-4)}_{=\delta(\delta-2)}\frac{x_0-y_0}{\abs{x_0-y_0}}\otimes \frac{x_0-y_0}{\abs{x_0-y_0}}+I\Big).
\end{split}
\]

%%%%%%%%%%%%%%%%%
%%%%%%%%%
First, all the eigenvalues of $X-Y$ are nonpositive since the right hand side of (\ref{eq:thm-sum-est}) annihilates vectors of the form 
$
\begin{pmatrix}
\xi,
\xi
\end{pmatrix}^T
$.
Moreover  by using 
$\begin{pmatrix}
\xi,
-\xi 
\end{pmatrix}^T$
in (\ref{eq:thm-sum-est}) we obtain
\begin{equation}
\label{eq:sep-ests}
\begin{split}
\langle X\xi, \xi\rangle -\langle Y\xi, \xi\rangle&\le 4 \left\langle \left(M+\frac2{\mu}M^2\right)\xi ,  \xi\right\rangle.
\end{split}
\end{equation}
Next we observe that
\begin{align}
\label{eq:ishii-lions-computation}
& \, 4\left\langle (M+\frac{2}{\mu} M^2 )\frac{x_0-y_0}{\abs{x_0-y_0}},\frac{x_0-y_0}{\abs{x_0-y_0}} \right \rangle \\
= & \,
4C\delta\abs{x_0-y_0}^{\delta-2}(\delta-2+1)+\frac{8}{\mu} C^2\a^2\abs{x_0-y_0}^{2(\delta-2)}(\a(\delta-2)+1)\nonumber\\
\le & \,  3C\delta\abs{x_0-y_0}^{\delta-2}(\delta-1)    \nonumber
\end{align}
by choosing $\mu$ such that 
\[
\begin{split}
\frac{8}{\mu} C^2\a^2\abs{x_0-y_0}^{2(\delta-2)}(\a(\delta-2)+1)<- C\delta\abs{x_0-y_0}^{\delta-2}(\delta-2+1).
\end{split}
\] 
This together with (\ref{eq:sep-ests}) implies that one of the eigenvalues of $X-Y$ will have to be smaller than $3C\delta\abs{x_0-y_0}^{\delta-2}(\delta-1)$. This together with nonpositivity of eigenvalues implies for large enough $C>0$ that
\begin{equation}\label{eq:traces}
\begin{split}
0\le \tr(X+4I)-\tr(Y)&\le 4n+ 3C\delta\abs{x_0-y_0}^{\delta-2}(\delta-1)\\
&\le 4n+ 3C\delta 2^{\delta-2}(\delta-1)<0,
\end{split}
\end{equation}
a contradiction.
To obtain the strict inequality above  we used the fact that $\abs{x_0-y_0}\le 2$  and $\delta-1<0$. 

This completes the proof of 
\begin{align*}
u(x)-u(y)\le C\abs{x-y}^\delta+2\abs{x-z_0}^2.
\end{align*}
Suppose then that we want to obtain $\abs{u(\ol x)-u(\ol y)}\le C\abs{\ol x-\ol y}^\delta$ for some given $\ol x,\ol y\in B_{1/4}$. We may always choose the points so that $u(\ol x)\ge u(\ol y)$, and we may select $z_0=\ol x$. Thus the claim follows from the previous estimate.
\end{proof}

%%%%%%%%%%%%%%%%%%%%%%%%%%

Now we explain how this formulation of the Ishii-Lions method connects with the comparison principle for $2n$-PDEs. 
Recall that $I-2P$ is the reflection with respect to the orthogonal complement of $\eta$. Now take any of the $n-1$ orthonormal basis vectors of the orthogonal complement and denote it by $\nu$. Observe that since the reflection leaves these vectors untouched, we get
\begin{align}
\label{eq:eigenvalues}
\begin{pmatrix}
I&I-2P\\
I-2P&I 
\end{pmatrix} \begin{pmatrix}
\nu\\
\nu
\end{pmatrix}=2 \begin{pmatrix}
\nu\\
\nu
\end{pmatrix}.  
\end{align}
We have found $n-1$ eigenvectors with the eigenvalue 2. Then using that the reflections flips $\eta$, we get
 \begin{align*}
\begin{pmatrix}
I&I-2P\\
I-2P&I 
\end{pmatrix} \begin{pmatrix}
\eta\\
-\eta
\end{pmatrix}=2 \begin{pmatrix}
\eta\\
-\eta
\end{pmatrix}.  
\end{align*}
Similarly, we see that 
\begin{align*}
\begin{pmatrix}
I&I-2P\\
I-2P&I 
\end{pmatrix} \begin{pmatrix}
\eta\\
\eta
\end{pmatrix}=0= 
\begin{pmatrix}
I&I-2P\\
I-2P&I 
\end{pmatrix} \begin{pmatrix}
\nu\\
-\nu
\end{pmatrix}.
\end{align*}
So we have found $2n$ eigenvectors, and see that the matrix is positive (semidefinite) as stated in the previous section. 

Now, in the proof of Theorem~\ref{thm:ishii-lions}, the key point is to use theorem on sums. Assuming for expository reasons that everything is smooth and we drop the localization term in Ishii-Lions so that (\ref{eq:thm-sum-est}) reads as
\begin{align}
\label{eq:thm-sum-est-simplified}
\begin{pmatrix}
D_{xx}u&0\\
0&-D_{yy}u 
\end{pmatrix}\le D^2 f(x,y),
\end{align}
with
\begin{align*}
 D^2f(x,y)&=\begin{pmatrix}
M&-M\\
-M&M 
\end{pmatrix},
\end{align*}
and
\begin{align*}
M=C\delta\abs{x-y}^{\delta-2}\Big((\delta-2)\frac{x-y}{\abs{x-y}}\otimes \frac{x-y}{\abs{x-y}}+I\Big).
\end{align*}
Then estimation starting from (\ref{eq:sep-ests}) can be written as 
\begin{align*}
\begin{pmatrix}
\eta\\
-\eta
\end{pmatrix}^T
\begin{pmatrix}
D_{xx}u&0\\
0&-D_{yy}u 
\end{pmatrix} 
\begin{pmatrix}
\eta\\
-\eta
\end{pmatrix}
&\le 
\begin{pmatrix}
\eta\\
-\eta
\end{pmatrix}^T
\begin{pmatrix}
M&-M\\
-M&M 
\end{pmatrix}
\begin{pmatrix}
\eta\\
-\eta
\end{pmatrix}
\\
& =4\langle M\eta, \eta\rangle=  4C\delta\abs{x-y}^{\delta-2}(\delta-1)<0.
\end{align*}
 Then the proof of Theorem~\ref{thm:ishii-lions} continues to produce a contradiction by saying (taking into account our simplifications here) that together with nonpositivity of eigenvalues  this implies
 \begin{align*}
  0\le \tr(X)-\tr(Y)&\le 4C\delta\abs{x-y}^{\delta-2}(\delta-1)<0.
\end{align*}
If we write the whole argument directly using notation in (\ref{eq:thm-sum-est-simplified}), it reads as
\begin{align*}
\tr\{D_{xx}u-D_{yy}u\}&=\sum_{i=1}^n\xi_i^T
\begin{pmatrix}
D_{xx}u&0\\
0&-D_{yy}u 
\end{pmatrix} 
\xi_i
\le 
 \sum_{i=1}^n
 \xi_i^T
\begin{pmatrix}
M&-M\\
-M&M 
\end{pmatrix}
\xi_i\\
&= \sum_{i=1}^n
 \xi_i^T
D^2 f
\xi_i \le 4C\delta\abs{x-y}^{\delta-2}(\delta-1)<0,
\end{align*}
where $\xi_i$ are the eigenvectors found around (\ref{eq:eigenvalues}) related to the eigenvalues $2$, and the first inequality follows from the fact that the components of the vectors there form an orthonormal basis. But now we can write using the linearity of the trace and the fact that the eigenvectors form a spectral decomposition that
\begin{align*}
\sum_{i=1}^n\xi_i^T
D^2 f
\xi_i=\sum_{i=1}^n \tr\left\{
D^2 f 
\xi_i\otimes\xi_i\right\}&=
 \tr\left\{
D^2 f
\sum_{i=1}^n\xi_i\otimes\xi_i\right\}\\
&=
\half  \tr\left\{
D^2 f
\begin{pmatrix}
I&I-2P\\
I-2P&I 
\end{pmatrix} 
\right\}.
\end{align*}
This gives a connection to the $2n$-PDE.

Finally, we could have presented the Ishii-Lions method and the theorem on sums in the form, where (\ref{eq:thm-sum-est}) is multiplied by the coeffiecient matrix to get 
\begin{align*}
 \tr&\left\{ \begin{pmatrix}
I&I-2P\\
I-2P&I 
\end{pmatrix}
\begin{pmatrix}
X&0\\
0&-Y 
\end{pmatrix}\right\}\\
&\le  \tr\left\{\begin{pmatrix}
I&I-2P\\
I-2P&I 
\end{pmatrix} \Big( D^2 f(x_0,y_0)+\frac{1}{\mu} (D^2 f(x_0,y_0))^2\Big)
\right\},
\end{align*}
see \cite{ishiil90,porrettap13}. 
This would have given a more direct access to the proof of the principle of comparison for $2n$ -PDEs, but we decided to first follow the presentation in \cite[Theorem 6.1]{parviainen24}, where the estimates are produced from the theorem on sums multiplied by vectors from left and right. This might be a slightly simpler way to use the theorem, and this approach is quite common in the literature of viscosity solutions in any case. Moreover, the approaches are equivalent as explained above, and we present the second approach when dealing with the $p$-Laplacian in Theorem \ref{thm: ishii-lions p-laplace}.

\section{Infinity Laplacian and Tug-of-war}
\label{sec:infty}
\subsection{2n-DPP to $2n$-PDE}

We again start with the $2n$-DPP method arising from the tug-of-war game, and derive the corresponding $2n$-dimensional partial differential equation. After this we work out the comparison proof for the $2n$-PDE. The Ishii-Lions method is postponed to Section \ref{sec:p-lap}, where we cover the whole range of $p$ in the context of the $p$-Laplacian.

As explained in Section~\ref{sec:intuition}, we need to look at the DPP with doubled variables
\begin{align*}
G(x,y)=\frac{1}{2}\sup_{B_{\eps}(x)\times B_{\eps}(y)}G\,+\,\frac{1}{2}\inf_{B_{\eps}(x)\times B_{\eps}(y)}G\,.
\end{align*}
Let us make a formal Taylor approximation for any smooth $G$ defined in $\R^{2n}$ with non-vanishing gradient. We set  
\begin{align*}
h_x=\eps\frac{D_x G}{\abs{D_x G}}:=\eps\frac{D_x G(x,y)}{\abs{D_x G(x,y)}},\quad h_y=\eps\frac{D_y G}{\abs{D_y G}}:=\eps\frac{D_y G(x,y)}{\abs{D_y G(x,y)}}
\end{align*}
to obtain
\begin{align*}
\sup_{B_{\eps}(x)\times B_{\eps}(y)}G&\approx
 G(x+h_x,y+h_y)=G(x,y)+\left\langle \begin{pmatrix}
D_x G(x,y)\\
D_y G(x,y)
\end{pmatrix}, \begin{pmatrix}
h_x\\
h_y 
\end{pmatrix}\right\rangle
\\
&+\half \left\langle \begin{pmatrix}
D_{xx}G(x,y)&D_{xy}G(x,y)\\
D_{yx}G(x,y)&D_{yy}G(x,y) 
\end{pmatrix} \begin{pmatrix}
h_x\\
h_y 
\end{pmatrix}, \begin{pmatrix}
h_x\\
h_y 
\end{pmatrix} \right\rangle+\text{error}. 
\end{align*}
Selecting 
\begin{align*}
h_x=-\eps\frac{D_x G}{\abs{D_x G}},\quad h_y=-\eps\frac{D_y G}{\abs{D_y G}}
\end{align*}
we get an approximation for $\inf_{B_{\eps}(x)\times B_{\eps}(y)}G$. Then summing the approximation for the $\inf$ and $\sup$, we see that the first order terms cancel out and we get
\begin{align}
\label{eq:taylor-sup-inf}
\frac{1}{2}&\sup_{B_{\eps}(x)\times B_{\eps}(y)}G\,+\,\frac{1}{2}\inf_{B_{\eps}(x)\times B_{\eps}(y)}G
\\&\approx \frac{\eps^{2}}{2} \left\langle \begin{pmatrix}
D_{xx}G&D_{xy}G\\
D_{yx}G&D_{yy}G 
\end{pmatrix} \begin{pmatrix}
\frac{D_x G}{\abs{D_x G}}\\
\frac{D_y G}{\abs{D_y G}}
\end{pmatrix}, \begin{pmatrix}
\frac{D_x G}{\abs{D_x G}}\\
\frac{D_y G}{\abs{D_y G}}
\end{pmatrix} \right\rangle+\text{error}\nonumber
\\
&=\frac{\eps^{2}}{2} \tr\left\{ \begin{pmatrix}
\frac{D_x G}{\abs{D_x G}}\\
\frac{D_y G}{\abs{D_y G}}
\end{pmatrix}\otimes \begin{pmatrix}
\frac{D_x G}{\abs{D_x G}}\\
\frac{D_y G}{\abs{D_y G}}
\end{pmatrix} \begin{pmatrix}
D_{xx}G&D_{xy}G\\
D_{yx}G&D_{yy}G 
\end{pmatrix} \right\}+\text{error}\nonumber
\end{align}
where we used the shorthand notation $D_{xx}G:=D_{xx}G(x,y)$ etc.
We formally obtain the equation  
\begin{equation}\label{eq:2npdeinfinity}
    \tr\left\{ \begin{pmatrix}
\frac{D_x G}{\abs{D_x G}}\\
\frac{D_y G}{\abs{D_y G}}
\end{pmatrix}\otimes \begin{pmatrix}
\frac{D_x G}{\abs{D_x G}}\\
\frac{D_y G}{\abs{D_y G}}
\end{pmatrix} \begin{pmatrix}
D_{xx}G&D_{xy}G\\
D_{yx}G&D_{yy}G 
\end{pmatrix} \right\}=0. \end{equation}
Note that this equation is not $\Delta^N_\infty G(x,y)=0
$
since for $\Delta^N_\infty G(x,y)$ the vector  $(
\frac{D_x G}{\abs{D_x G}},
\frac{D_y G}{\abs{D_y G}}
)
$
is not equal to
$$\left(
\frac{D_x G}{\sqrt{\abs{D_x G}^2+\abs{D_y G}^2}},
\frac{D_y G}{{\sqrt{\abs{D_x G}^2+\abs{D_y G}^2}}}
\right).
$$

\subsection{2n-PDE and Ishii-Lions method}\label{subsec:2npdeInfty}
Next we consider the $2n$-PDE approach.
Let $u$ be a  smooth infinity harmonic function with non-vanishing gradient and write
\begin{align*}
\Delta_{\infty}^N u = \left\langle D^2u\frac{D u}{\abs{D u}} ,\frac{D u}{\abs{D u}} \right\rangle =\tr\left\{\left(\frac{Du}{|Du|}\otimes \frac{Du}{|Du|}\right) D^2u \right\}=0.
\end{align*}
We want to prove
 \begin{align*}
         u(x)-u(y)\le f(x,y)=C\abs{x-y}^{\delta}.
 \end{align*}
To this end, thriving for a contradiction, assume that
\begin{align*}
\max_{B_1\times B_1}(u(x)-u(y)-f(x,y))=u(x_0)-u(y_0)-f(x_0,y_0):=w(x_0,y_0)>0,
\end{align*}
is attained at  an interior point. Again, this assumption could be guaranteed by a modification of $f$, but for expository reasons we omit it at this point. This immediately implies $x_0\not= y_0$.
We set $G(x,y)=u(x)-u(y)$ as before.

Let us again consider the maximum point as in (\ref{eq:max}) so that $Dw(x,y)=0$ i.e.\ $Du(x_0)=D_xG(x_0,y_0)=D_xf(x_0,y_0),\ 
-Du(y_0)=D_y G(x_0,y_0)=D_y f(x_0,y_0)$ and $D^2w(x_0,y_0)\le 0$. Let us write 
\begin{align*}
    \eta&=\frac{D_xG(x_0,y_0)}{|D_xG(x_0,y_0)|}=\frac{Du(x_0)}{|Du(x_0)|}\\
    &=\frac{D_x f(x_0,y_0)}{|D_xf(x_0,y_0)|}=\frac{x_0-y_0}{\abs{x_0-y_0}}= -\frac{D_y f(x_0,y_0)}{|D_yf(x_0,y_0)|}\\
    &=-\frac{D_yG(x_0,y_0)}{|D_yG(x_0,y_0)|}
    =\frac{Du(y_0)}{|Du(y_0)|}.
\end{align*}
Since $u$ is a solution we can use (\ref{eq:2npdeinfinity}) and write
\begin{align*}
0&=\tr\left\{\begin{pmatrix}
\eta\\
-\eta
\end{pmatrix}\otimes \begin{pmatrix}
\eta\\
-\eta
\end{pmatrix}\begin{pmatrix}
    D^2u(x_0)&  0 \\
    0 &  -D^2u(y_0)
\end{pmatrix}\right\}\\
&=\tr\left\{\begin{pmatrix}
\eta\otimes\eta& -\eta\otimes \eta\\
-\eta\otimes\eta &  \eta\otimes\eta
\end{pmatrix}\begin{pmatrix}
    D^2u(x_0)&  0 \\
    0 &  -D^2u(y_0)
\end{pmatrix}\right\}.    
\end{align*}
Using this and the fact that $D^2 w(x_0,y_0)\le 0$, we conclude, since the coefficient matrix  
is positive semidefinite, that
\begin{align}
\label{eq:matrixproduct1}
0&\ge \tr \left\{\begin{pmatrix}
\eta\otimes\eta& -\eta\otimes \eta\nonumber \\
-\eta\otimes \eta &  \eta\otimes\eta\end{pmatrix}
D^2 w(x_0,y_0)\right\}\\
&=-\tr \left\{\begin{pmatrix}
\eta\otimes\eta& -\eta\otimes \eta\\
-\eta\otimes \eta &  \eta\otimes\eta\end{pmatrix}
D^2 f(x_0,y_0)\right\}.    
\end{align}

By this and \eqref{eq:DD}, we get
\begin{align}
\label{eq:key-comb-infty}
0&\le 
\tr\left\{ \begin{pmatrix}
\eta\otimes\eta& -\eta\otimes \eta\\
-\eta\otimes \eta &  \eta\otimes\eta
\end{pmatrix} D^2 f(x_0,y_0)\right\} \\
&=
\left\langle \begin{pmatrix}
M&-M\\
-M&M 
\end{pmatrix}  \begin{pmatrix}
\eta\\
-\eta
\end{pmatrix}, 
\begin{pmatrix}
\eta\\
-\eta
\end{pmatrix}
\right\rangle 
\nonumber\\
&=4\langle M\eta,\eta \rangle \nonumber\\
&=C\delta\abs{x_0-y_0}^{\delta-2}4 \tr\{\eta\otimes \eta\big((\delta-2)\eta\otimes \eta+I\big)\}\nonumber\\
&=4C\delta\abs{x_0-y_0}^{\delta-2}(\delta-1)<0,\nonumber
\end{align}
a contradiction. Observe that the negative number obtained above is exactly the same as in the linear case.
A rigorous proof would use viscosity solutions and localization terms as in Section \ref{sec:laplacian} above. Also observe that it would have been enough to use the equation 
\begin{align}
\label{eq:simple-eq}
     0=\tr\left\{\begin{pmatrix}
\eta\otimes\eta& 0\\
0&  \eta\otimes\eta
\end{pmatrix}\begin{pmatrix}
    D^2u(x_0)&  0 \\
    0 &  -D^2u(y_0)
\end{pmatrix}\right\}
\end{align}
above, and the only change would have been that we get $2$ instead $4$ in the previous estimate.

We postpone the Ishii-Lions method to Theorem~\ref{thm:ishii-lions-infty}.

\section{ $p$-Laplacian and Tug-of-War with Noise}
\label{sec:p-lap}
In this section, we work out the connection between the different regularity methods in the context of the $p$-Laplacian and the tug-of-war with noise.

\subsection{2n-DPP to $2n$-PDE } 
The $2n$-DPP is obtained by combining the cases $p=2$ and $p=\infty$ as pointed out in Section~\ref{sec:intuition} whenever $p\ge 2$,
\begin{align*}
G(x,y)&=\frac{\a}{2}\left\{\sup_{B_{\eps}(x)\times B_{\eps}(y)}G\,+\inf_{B_{\eps}(x)\times B_{\eps}(y)}G\right\}\\
&\hspace{10 em}+\beta\kint_{B_{\eps}(0)} G(x+h,y+P_{x,y}(h)) \ud h.
\end{align*}
The $2n$-equation is just a linear combination of \eqref{eq:2npdelaplacian} and (\ref{eq:2npdeinfinity})
\begin{align}
 \label{eq:2npdeplaplacian} \nonumber
0=&(p-2)\tr\left\{ \begin{pmatrix}
\frac{D_x G}{\abs{D_x G}}\\
\frac{D_y G}{\abs{D_y G}}
\end{pmatrix}\otimes 
\begin{pmatrix}
\frac{D_x G}{\abs{D_x G}}\\
\frac{D_y G}{\abs{D_y G}}
\end{pmatrix} D^2G
\right\}\\
&+\tr\left\{ \begin{pmatrix}
I&I-2P\\
I-2P&I 
\end{pmatrix} D^2 G\right\}.
\end{align}

Recall that $P$ is the $n\times n$ matrix $\frac{x-y}{|x-y|}\otimes \frac{x-y}{|x-y|}$.
\subsection{2n-PDE and Ishii-Lions method}
For $1<p<\infty$, we consider a smooth solution $u$ with non-vanishing gradient of  the normalized $p$-Laplacian
\begin{align*}
\Delta_{p}^N u &=\Delta u + (p-2)\left\langle D^2u\frac{D u}{|Du|}
,\frac{D u}{\abs{D u}} \right\rangle \\
&=\tr\left\{\left(I+(p-2)\left(\frac{Du}{|Du|}\otimes \frac{Du}{|Du|}\right)\right) D^2u \right\}=0.
\end{align*}

Similarly as before we want to prove that
 \begin{align*}
         u(x)-u(y)\le f(x,y)=C\abs{x-y}^{\delta},
 \end{align*}
and assume that
\begin{align*}
\max_{\ol B_1\times \ol B_1}(u(x)-u(y)-f(x,y))=u(x_0)-u(y_0)-f(x_0,y_0):=w(x_0,y_0)>0
\end{align*}
is attained at an interior point with $x_0 \neq y_0$ similarly as in the case $p=\infty$. We again, denote the normalized gradient term
\begin{align*}
    \eta:&=\frac{D_xG(x_0,y_0)}{|D_xG(x_0,y_0)|}=\frac{Du(x_0)}{|Du(x_0)|}\\
    &=\frac{D_x f(x_0,y_0)}{|D_xf(x_0,y_0)|}=\frac{x_0-y_0}{\abs{x_0-y_0}}= -\frac{D_y f(x_0,y_0)}{|D_yf(x_0,y_0)|}\\
    &=-\frac{D_yG(x_0,y_0)}{|D_yG(x_0,y_0)|}
    =\frac{Du(y_0)}{|Du(y_0)|}.
\end{align*}
Now it holds that
\begin{align*}
\begin{pmatrix}
D^2u(x_0)&0\\
0&-D^2u(y_0) 
\end{pmatrix}\le D^2 f(x_0,y_0).
\end{align*}
Next we use the fact that $u$ is a solution and the $2n$-PDE \eqref{eq:2npdeplaplacian}. Notice that for $(x,y) = (x_0,y_0)$ we have $P = \eta \otimes \eta$, where $P$ is the same reflection matrix as in \eqref{eq:2npdeplaplacian}. Thus, the desired contradiction would follow from the computation
\begin{align*}
    0 & = \tr\left\{ \begin{pmatrix}
I + (p-2)\eta \otimes \eta &0\\
0 &I + (p-2)\eta \otimes \eta 
\end{pmatrix} \begin{pmatrix}
D^2 u(x_0) &0\\
0 & D^2 u(y_0) 
\end{pmatrix}
\right\}\\
& = \tr\left\{ \begin{pmatrix}
I + (p-2)\eta \otimes \eta &I - p\eta \otimes \eta\\
I - p\eta \otimes \eta &I + (p-2)\eta \otimes \eta 
\end{pmatrix} \begin{pmatrix}
D^2 u(x_0) & 0 \\
0 & D^2 u(y_0) 
\end{pmatrix}
\right\}\\
& \leq \tr\left\{ \begin{pmatrix}
I + (p-2)\eta \otimes \eta &I - p\eta \otimes \eta\\
I - p\eta \otimes \eta &I + (p-2)\eta \otimes \eta 
\end{pmatrix} D^2f(x_0,y_0)
\right\} < 0.
\end{align*}
Now to justify the first inequality in the previous computation, we need to verify that the matrix
\begin{equation}\label{eq:matrix_p-laplacian}
    \mathcal{A} = \mathcal{A}(\eta,p) := 
    \begin{pmatrix}
        I+(p-2)\eta \otimes \eta  & I -p\,\eta \otimes \eta\\
        I-p\, \eta \otimes \eta &  I+(p-2)\eta \otimes \eta
    \end{pmatrix}
\end{equation}
is positive semi-definite. To see this, fix any orthonormal basis $\{ \xi_1,\dots \xi_{n-1} \}$ of the orthogonal complement of $\eta$. Then we have a linear basis of $\R^{2n}$ consisting of eigenvectors of $\mathcal{A}$:
\begin{align*}
    \mathcal{A} \begin{pmatrix}
        \eta\\
        \eta
        \end{pmatrix}
        & = 0,
        && \mathcal{A} \begin{pmatrix}
        \eta\\
        -\eta
        \end{pmatrix}
         = 2(p - 1)
        \begin{pmatrix}
        \eta\\
        -\eta
        \end{pmatrix},\\
        \mathcal{A} \begin{pmatrix}
        \xi_i\\
        \xi_i
        \end{pmatrix} & = 2\begin{pmatrix}
        \xi_i\\
        \xi_i
        \end{pmatrix},
         && \mathcal{A} \begin{pmatrix}
        \xi_i\\
        -\xi_i
        \end{pmatrix} = 
        0\begin{pmatrix}
        \xi_i\\
        -\xi_i
        \end{pmatrix}.
\end{align*}
Since $p > 1$, $\mathcal{A}$ is positive semi-definite. This would conclude the case when $u$ is smooth.

In \cite[Proposition B.2]{attouchipr17} and \cite[Lemma B.1]{siltakoski21}, the Ishii-Lions method is used to prove the Lipschitz regularity for related equations after deducing H\"older continuity from the Krylov-Safonov theory. Here, the intention is to compare the methods, and therefore we work with the Ishii-Lions approach also in the context of H\"older continuity.

\begin{theorem}[Ishii-Lions]\label{thm: ishii-lions p-laplace} Let $1<p<\infty$ and $u \in C(\ol B_1(0))$ be a viscosity solution to the normalized $p$-Laplace equation $\Delta^N_p u =0$. Then for any $\delta\in (0,1)$ there is $C>0$ such that
\[
\begin{split}
\abs{u(x)-u(y)}\le C\abs{x-y}^{\delta} \text{ for all } x,y\in B_{1/4}(0).
\end{split}
\]
\end{theorem}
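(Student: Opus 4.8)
The plan is to follow the proof of Theorem~\ref{thm:ishii-lions} line for line, with the constant identity block matrix there (the coefficient matrix in \eqref{eq:thm-sum-est}) replaced by the matrix $\mathcal A=\mathcal A(\eta,p)$ of \eqref{eq:matrix_p-laplacian}, taken with $\eta=\tfrac{x_0-y_0}{\abs{x_0-y_0}}$; this is exactly the coefficient matrix of the $2n$-PDE \eqref{eq:2npdeplaplacian} evaluated at the maximum point, and it is positive semi-definite for $p>1$ by the eigen-decomposition recorded right after \eqref{eq:matrix_p-laplacian}. As announced after Theorem~\ref{thm:ishii-lions}, here I would use the variant of the argument in which the theorem on sums is multiplied by the coefficient matrix before one takes traces, rather than testing with individual eigenvectors. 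First I would normalise so that $0\le u\le 1$, fix $z_0\in B_{1/4}$, and, seeking a contradiction, assume that $u(x)-u(y)-\ol f(x,y)$ with $\ol f(x,y)=C\abs{x-y}^{\delta}+2\abs{x-z_0}^2$ attains a positive maximum $\theta>0$ over $\ol B_1\times\ol B_1$ at a point $(x_0,y_0)$. Exactly as in the linear case, $\theta>0$ and $0\le u\le1$ force $x_0\ne y_0$, $x_0\ne z_0$ and $\abs{x_0-z_0}\le1$, and, for $C$ large, force $(x_0,y_0)$ to be an interior point, so $f=C\abs{x-y}^\delta$ is smooth near $(x_0,y_0)$ with non-vanishing partial gradients.

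Next I would invoke the theorem on sums for $x\mapsto u(x)-2\abs{x-z_0}^2$ against $y\mapsto u(y)$ with doubling function $f$, and then move the quadratic part back into the subjet as in the linear proof, producing symmetric $X,Y$ and a parameter $\mu>0$ with
\[
\big(a,\;X+4I\big)\in\ol J^{2,+}u(x_0),\qquad \big(-D_yf(x_0,y_0),\;Y\big)\in\ol J^{2,-}u(y_0),
\]
\[
\begin{pmatrix}X&0\\0&-Y\end{pmatrix}\le D^2f(x_0,y_0)+\tfrac1\mu\big(D^2f(x_0,y_0)\big)^2,
\]
where $a=D_xf(x_0,y_0)+4(x_0-z_0)$ and $D^2f(x_0,y_0)=\begin{pmatrix}M&-M\\-M&M\end{pmatrix}$ with $M$ as in \eqref{eq:DD}. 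Here a point requiring care appears: the gradient of $u$ at $y_0$ is $-D_yf(x_0,y_0)=C\delta\abs{x_0-y_0}^{\delta-2}(x_0-y_0)$, exactly in the direction $\eta$, whereas the gradient at $x_0$ equals $a=C\delta\abs{x_0-y_0}^{\delta-1}\eta+4(x_0-z_0)$, which the localisation has tilted off the axis $\eta$. Since $\abs{x_0-z_0}\le1$ while $\abs{x_0-y_0}\le2$ forces $C\delta\abs{x_0-y_0}^{\delta-1}\ge C\delta\,2^{\delta-1}$, for $C$ large $a\ne0$ (so $\Delta^N_p$ is meaningful at both test points) and $\tfrac{a}{\abs a}$ makes an angle $O(1/C)$ with $\eta$, uniformly in $(x_0,y_0)$; I would use this to replace $\tfrac a{\abs a}$ by $\eta$ in the sub-solution inequality at the cost of an error that is lower order compared with the principal term below.

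Then, using that $u$ solves $\Delta^N_p u=0$, the sub/supersolution inequalities give $\tr\{(I+(p-2)\eta\otimes\eta)(X+4I)\}\ge -(\text{lower order})$ and $\tr\{(I+(p-2)\eta\otimes\eta)Y\}\le0$; since both diagonal blocks of $\mathcal A$ equal $I+(p-2)\eta\otimes\eta$, subtracting these yields $\tr\{\mathcal A\begin{pmatrix}X+4I&0\\0&-Y\end{pmatrix}\}\ge-(\text{lower order})$. As $\mathcal A\ge0$, multiplying the matrix inequality above (augmented by $\begin{pmatrix}4I&0\\0&0\end{pmatrix}$) by $\mathcal A$ and taking traces bounds the left-hand side from above by $4(n+p-2)+\tr\{\mathcal A(D^2f(x_0,y_0)+\tfrac1\mu(D^2f(x_0,y_0))^2)\}$, using $\tr\{\mathcal A\begin{pmatrix}4I&0\\0&0\end{pmatrix}\}=4(n+p-2)$. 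From the eigen-decomposition of $\mathcal A$ together with $M\eta=C\delta\abs{x_0-y_0}^{\delta-2}(\delta-1)\eta$ and $(D^2f)^2=2\begin{pmatrix}M^2&-M^2\\-M^2&M^2\end{pmatrix}$ one computes
\[
\tr\{\mathcal A\,D^2f(x_0,y_0)\}=4(p-1)C\delta\abs{x_0-y_0}^{\delta-2}(\delta-1)<0,
\]
\[
\tr\{\mathcal A\,(D^2f(x_0,y_0))^2\}=8(p-1)C^2\delta^2\abs{x_0-y_0}^{2(\delta-2)}(\delta-1)^2>0,
\]
so choosing $\mu$ large makes the second trace at most half the size of the first. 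Using $\abs{x_0-y_0}\le2$ and $\delta-1<0$ as in \eqref{eq:traces}, this gives
\[
0\le 4(n+p-2)+2(p-1)C\delta\,2^{\delta-2}(\delta-1)+(\text{lower order}),
\]
which is strictly negative once $C$ is large enough — the desired contradiction. Choosing $z_0=\ol x$ for given $\ol x,\ol y\in B_{1/4}$ with $u(\ol x)\ge u(\ol y)$ then yields $\abs{u(\ol x)-u(\ol y)}\le C\abs{\ol x-\ol y}^{\delta}$.

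The main obstacle, and the only genuinely new feature compared with Theorem~\ref{thm:ishii-lions}, is that the coefficient of $\Delta^N_p$ depends on the direction of $Du$: the localisation term $2\abs{x-z_0}^2$, harmless for the Laplacian, now perturbs $Du(x_0)$ away from $\eta$, so one must (i) keep $Du$ from vanishing at the test points and (ii) verify that this perturbation, hence the error incurred by replacing $\tfrac a{\abs a}$ with $\eta$ in the sub-solution inequality, is genuinely lower order than the principal term $C\delta\abs{x_0-y_0}^{\delta-2}(\delta-1)$; both are achieved by taking $C$ large, but this is where the estimates must be run carefully. One should also note that for $1<p<2$ the matrix $I+(p-2)\eta\otimes\eta$ is still bounded and positive definite, with eigenvalues $1$ and $p-1$, so this singular range requires no separate treatment; the only true singularity of $\Delta^N_p$ is that it is undefined where $Du=0$, which has been excluded.
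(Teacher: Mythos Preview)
Your outline is the paper's own approach: same localisation $\ol f$, same maximum-point reduction, same use of the theorem on sums, and the same positive semidefinite block matrix $\mathcal A(\eta,p)$ to pass from the sub/super-solution inequalities to an estimate on $D^2f$. The one substantive gap is precisely at the step you flag as needing care but do not carry out: the ``lower order'' error incurred by replacing $\tfrac{a}{|a|}$ with $\eta$ in the sub-solution inequality is of the form $(p-2)\tr\{(\eta_1\otimes\eta_1-\eta\otimes\eta)(X+4I)\}$, and to bound this you must control $\|X\|$, not just the upper eigenvalues of $X$. The upper matrix inequality from the theorem on sums gives only an upper bound on the eigenvalues of $X$; a two-sided bound requires either the companion lower bound $-(\mu+\|D^2f\|)I\le\begin{pmatrix}X&0\\0&-Y\end{pmatrix}$ from the theorem on sums, or an argument extracting it from the equation itself. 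The paper invokes the former explicitly and, crucially, fixes $\mu=C|x_0-y_0|^{\delta-2}$ rather than taking ``$\mu$ large'': this choice is large enough that $\tfrac{1}{\mu}(D^2f)^2$ does not spoil the principal negative term, yet small enough that the lower bound still gives $\|X\|\le\text{const}\cdot C|x_0-y_0|^{\delta-2}$. Your instruction to take $\mu$ large, read literally, would make that lower bound useless.

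Once $\|X\|\lesssim C|x_0-y_0|^{\delta-2}$ is in hand, the paper combines it with the sharper angle estimate $|\eta_1-\eta|\lesssim C^{-1}|x_0-y_0|^{1-\delta}$ (rather than your uniform $O(1/C)$) to obtain $T_2\lesssim |x_0-y_0|^{-1}$, which is genuinely of lower order than the principal term $C|x_0-y_0|^{\delta-2}$ since $\delta-2<-1$ and $|x_0-y_0|\le 1$. Your cruder $O(1/C)$ angle bound would also close the argument, but only after the $\|X\|$ bound is supplied. In short: the strategy is right, but the proof is incomplete without the two-sided control on $X$ and a compatible, not merely ``large'', choice of $\mu$.
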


\begin{proof}
    Without loss of generality we assume $0 \leq u \leq 1$.
    We let $C > 0$ be a constant determined during the proof, and let $z_0 \in B_{1/4}(0)$ be arbitrary.
    Now consider the function
    \[
        \ol f(x,z) := f(x,z) + 2\abs{x-z_0}^2,
    \]
    where $f(x,z) := C\abs{x-z}^{\delta}$.
    Let $x_0,y_0 \in \overline{B}_1(0)$ so that
    \begin{equation}\label{eq: def theta}
        u(x_0) - u(y_0) - \ol f(x_0,y_0) = \max_{x,y \in \overline{B}_1(0)} u(x) - u(y) - \ol f(x,y) = \theta>0.
    \end{equation}
    The first objective is to prove that $\theta \leq 0$ by deriving a contradiction from the counter assumption $\theta > 0$.

    First note that $x_0,y_0 \notin \partial B_1(0)$. Otherwise, by choosing $C$ to be large enough, we have
    \begin{align*}
        \theta & = u(x_0) - u(y_0) - \ol f(x_0,y_0)\\
        & \leq 1 - f(x_0,y_0)\\
        & = 1 - C\abs{x_0-y_0}^{\delta} -2\abs{x_0-z_0}^2 < 0.
    \end{align*}
    Moreover by the counter assumption, we also have
    \begin{equation}\label{eq: Ishii-Lions x and y are close}
        \abs{x_0-y_0},\abs{x_0 - z_0} \leq 1.
    \end{equation}
    
    Next we fix another constant
    \[
        \mu := C \abs{x_0 - y_0}^{\delta - 2} > 0.
    \]
    By applying theorem on sums in the same way it was used in the proof of Theorem \ref{thm:ishii-lions} (except that now we also recall the lower bound from the references mentioned there), there are symmetric matrices $X$ and $Y$ satisfying
    \begin{align*}
        (D_xf(x_0,y_0)+4(x_0-z_0),X+4I) & \in \overline{J}^{2,+} u(x_0),  \\
        (-D_yf(x_0,y_0),Y) & \in \overline{J}^{2,-} u(y_0)
    \end{align*}
    and the inequalities
    \begin{align}
    \label{eq: Thm of Sums (1)}
    \begin{pmatrix}
    X&0\\
    0&-Y 
    \end{pmatrix} & \leq 
     \begin{pmatrix}
    M&-M\\
    -M&M 
    \end{pmatrix}
    + \frac{2}{\mu}
    \begin{pmatrix}
    M^2&-M^2\\
    -M^2&M^2 
    \end{pmatrix}\\
    \label{eq: Thm of Sums (2)}
     \begin{pmatrix}
    X&0\\
    0&-Y 
    \end{pmatrix} & \geq
    -(\mu + \norm{M})\begin{pmatrix}
    I&0\\
    0&I 
    \end{pmatrix}.
    \end{align}
    Here $M$ is the same matrix as in \eqref{eq:DD}, and $\norm{M}=\max\{\abs{\lambda}\,:\,\lambda \text{ is an eigenvalue of }M\}$. We denote normalized gradient vectors as
\[
    \eta_1 := \frac{D_xf(x_0,y_0)+4(x_0-z_0)}{\abs{D_xf(x_0,y_0)+4(x_0-z_0)}} \text{ and } \eta_2 := \frac{-D_yf(x_0,y_0)}{\abs{D_yf(x_0,y_0)}} = \frac{x_0-y_0}{\abs{x_0-y_0}}.
\]
Recall that $x_0 \neq y_0$ as $\theta > 0$.
By choosing $C$ to be large enough, it follows from \eqref{eq: Ishii-Lions x and y are close} that
\[
    \abs{D_yf(x_0,y_0)} = \abs{D_xf(x_0,y_0)} = C\delta\abs{x_0-y_0}^{\delta - 1} \geq C\delta > 8\abs{x_0-z_0}
\]
and that
\begin{equation}\label{eq: Ishii-Lions Norms of 1st derivatives}
    \abs{D_xf(x_0,y_0)+4(x_0-z_0)}, \abs{D_yf(x_0,y_0)} \geq \frac{C}{2}\delta \abs{x_0-y_0}^{\delta - 1} > 0.
\end{equation}
In particular, $\eta_1,\eta_2$ are well-defined.

Now we use the fact that $u$ is a viscosity solution, and compute
\begin{align}
    0 & \leq \tr\left((I + (p-2)\eta_1 \otimes \eta_1 )(X + 4I)\right) - \tr\left( (I + (p-2)\eta_2 \otimes \eta_2 )Y)\right) \label{eq: Difference (1)}\\
    & = \tr\left( (I + (p-2)\eta_2 \otimes \eta_2) (X - Y)\right) + (p-2)\tr((\eta_1 \otimes \eta_1 - \eta_2 \otimes \eta_2)X)) \nonumber \\
    & + 4\tr(I + (p-2)\eta_1 \otimes\eta_1) \nonumber \\
    & =: T_1 + T_2 + T_3. \nonumber
\end{align}
Then consider the matrix $\mathcal{A} = \mathcal{A}(\eta_2,p)$ from \eqref{eq:matrix_p-laplacian}. By applying the matrix inequality \eqref{eq: Thm of Sums (1)} from theorem on sums and the fact that $\mathcal{A}$ is positive semi-definite, we can estimate $T_1$ by
\begin{align}
    % 1
    T_1 & = \tr\left( (I + (p-2)\eta_2 \otimes \eta_2) (X - Y)\right)\label{eq: Difference (2)}\\
    % 2
    & =  \tr\left\{\mathcal{A}
    \begin{pmatrix}
        X & 0\\
        0 & -Y
    \end{pmatrix}\right\} \nonumber\\
    % 3
    & \leq \tr\left\{\mathcal{A}
    \begin{pmatrix}
        M + \frac{2}{\mu} M^2 & -(M + \frac{2}{\mu}M^2)\\
        -(M + \frac{2}{\mu}M^2) & M + \frac{2}{\mu}M^2
    \end{pmatrix}\right\}\nonumber \\
    % 4
    & = 8(p-1)\left\langle \left(M +\frac{2}{\mu}M^2\right)\eta_2, \eta_2 \right\rangle. \nonumber
    \end{align}
    We proceed by using the computation in \eqref{eq:ishii-lions-computation}, and estimate 
    \begin{align*}
    % 5
    T_1 & \leq 8(p-1)\left\langle \left(M +\frac{2}{\mu}M^2\right)\eta_2, \eta_2 \right\rangle\\
    & =
    8(p-1)C\delta\left(\abs{x_0-y_0}^{\delta-2}(\delta-1)
+\frac{2}{\mu} C\delta\abs{x_0-y_0}^{2(\delta-2)}(\delta(\delta-2)+1)\right)\\
    % 6
& = 8(p-1)C\delta \abs{x_0-y_0}^{\delta - 2} \left( (\delta - 1) + 2\delta(\delta - 1)^2\right)\\
    % 7
& \leq 4(p-1)C\delta \abs{x_0 - y_0}^{\delta - 2}(\delta - 1).
\end{align*}

The last term can easily be computed
\[
    T_3 = 4\tr(I + (p-2)\eta_1 \otimes\eta_1) = 4(n - p-2).
\]

In order to estimate the remaining term $T_2$, we apply the general fact, which is proven in e.g. \cite[Lemma A.1]{siltakoski2022holder}:
\begin{equation*}
    \left| \frac{a}{\abs{a}} - \frac{b}{\abs{b}} \right| \leq \frac{2\abs{a-b}}{\max\{\abs{a}, \abs{b} \} } \quad \text{for all } a,b \neq 0.
\end{equation*}
By applying this and \eqref{eq: Ishii-Lions Norms of 1st derivatives}, we have
\begin{align}
    T_2 & = (p-2)\tr((\eta_1 \otimes \eta_1 - \eta_2 \otimes \eta_2)X) \nonumber \\
    & \leq 2n\abs{p-2}\abs{\eta_1 - \eta_2}\norm{X} \nonumber \\
    & \leq 4n\abs{p-2}\norm{X}\frac{\abs{x_0-z_0}}{\max\{\abs{D_xf(x_0,y_0)+4(x_0-z_0)}, \abs{D_yf(x_0,y_0)}\}} \nonumber \\
    & \leq 8n\abs{p-2}\norm{X}C^{-1}\abs{x_0-y_0}^{1-\delta}. \label{eq:pre-estimateT2}
\end{align}
In the first inequality, we used the estimation
\begin{align*}
\norm{\eta_1 \otimes \eta_1 - \eta_2 \otimes \eta_2}\le  \norm{(\eta_1-\eta_2) \otimes \eta_1 - \eta_2 \otimes (\eta_2-\eta_1)}\le (\abs{\eta_1}+\abs{\eta_2})\abs{\eta_1-\eta_2}
\end{align*}
and in the last step the fact that $\abs{x_0-z_0}\le 1$. Since
\[
    \left\langle
    \begin{pmatrix}
    X&0\\
    0&-Y 
    \end{pmatrix}
    \begin{pmatrix}
    \xi\\
    0 
    \end{pmatrix},\begin{pmatrix}
    \xi\\
    0 
    \end{pmatrix}
    \right \rangle = \langle X\xi,\xi
    \rangle
\]
holds for any unit vector $\xi \in \R^n$,
it follows from the inequalities \eqref{eq: Thm of Sums (1)} and \eqref{eq: Thm of Sums (2)} that
\[
    -(\mu + \norm{M}) \leq \langle X\xi,\xi\rangle \leq \left\langle \left(M + \frac{2}{\mu} M^2 \right)\xi,\xi  \right\rangle,
\]
which further yields
\[
    \norm{X} \leq \mu + \norm{M} + \frac{2}{\mu} \norm{M^2}.
\]
Using this, the precise expressions of $M$ and $M^2$ given in \eqref{eq:DD} and in the proof of Theorem \ref{thm:ishii-lions} respectively, and by recalling the definition of $\mu$, we have the estimate
\begin{align*}
    \norm{X} \leq C\abs{x_0 - y_0}^{\delta - 2}\left( 1 + \delta(1-\delta) + 2\delta(\delta(2-\delta) + 1) \right).
\end{align*}
Combining this with \eqref{eq:pre-estimateT2}, we end up with
\[
    T_2 \leq L \abs{x_0 - y_0}^{-1}
\]
for some $L > 0$ independent of $C$ and $x_0,y_0$.

Finally, by combining the estimates for $T_1,T_2$ and $T_3$, and by noting that $\delta - 2 < -1$ as well as $\abs{x_0-y_0} \leq 1$, we can choose $C$ to be large enough so that

\begin{align*}
    0 & \leq T_1 + T_2 + T_3\\
    & \leq 4(p-1)C\delta \abs{x_0-y_0}^{\delta-2}(\delta-1) + L \abs{x_0-y_0}^{-1} + 4(n-p-2)\\
    & \leq \abs{x_0-y_0}^{\delta-2}(\delta - 1) < 0.
\end{align*}
Thus, we have arrived to the contradiction.

We are now ready to conclude the desired H\"older regularity of $u$. Let $x,y \in B_{1/4}(0)$ and choose $z_0 := x$. Without loss of generality, we may assume that $u(x) \geq u(y)$.
Since $\theta > 0$ in \eqref{eq: def theta} led into a contradiction, we conclude that
\[
   \abs{u(x) - u(y)} =  u(x) - u(y) \leq f(x,y) = C\abs{x-y}^{\delta}.
\]
\end{proof}

We discuss the analogous result of Theorem \ref{thm: ishii-lions p-laplace} for $p = \infty$.
\begin{theorem}\label{thm: Ishii-Lions Inf}
\label{thm:ishii-lions-infty}
Let $u \in C(\ol B_1(0))$ be a viscosity solution to the normalized infinity Laplace equation $\Delta^N_{\infty} u =0$. Then for any $\delta\in (0,1)$ there is $C>0$ such that
\[
\begin{split}
\abs{u(x)-u(y)}\le C\abs{x-y}^{\delta} \text{ for all } x,y\in B_{1/4}(0).
\end{split}
\]
\end{theorem}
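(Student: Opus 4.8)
The proof follows the template of Theorem~\ref{thm: ishii-lions p-laplace}, with the finite-$p$ coefficient matrix $I+(p-2)\eta\otimes\eta$ of $\Delta^N_p$ replaced everywhere by the rank-one matrix $\eta\otimes\eta$ of $\Delta^N_\infty$, and correspondingly the $2n$-coefficient matrix $\mathcal{A}(\eta_2,p)$ of \eqref{eq:matrix_p-laplacian} replaced by the positive semidefinite matrix
\[
\begin{pmatrix}
\eta_2\otimes\eta_2 & -\eta_2\otimes\eta_2\\
-\eta_2\otimes\eta_2 & \eta_2\otimes\eta_2
\end{pmatrix}
=\begin{pmatrix}\eta_2\\-\eta_2\end{pmatrix}\otimes\begin{pmatrix}\eta_2\\-\eta_2\end{pmatrix}\ \ge\ 0 .
\]
As before, I would normalize so that $0\le u\le 1$, fix $z_0\in B_{1/4}(0)$, set $\ol f(x,y):=C\abs{x-y}^{\delta}+2\abs{x-z_0}^{2}$ with $f(x,y):=C\abs{x-y}^\delta$ and $C$ to be chosen large, and suppose for contradiction that
\[
\theta:=\max_{\ol B_1\times\ol B_1}\bigl(u(x)-u(y)-\ol f(x,y)\bigr)=u(x_0)-u(y_0)-\ol f(x_0,y_0)>0 .
\]
Exactly as in Theorem~\ref{thm: ishii-lions p-laplace}, taking $C$ large forces $x_0,y_0\notin\partial B_1(0)$ and $\abs{x_0-y_0},\abs{x_0-z_0}\le 1$, while $\theta>0$ forces $x_0\ne y_0$, so that $\ol f$ is smooth at $(x_0,y_0)$ and $D^2f(x_0,y_0)$ is given by \eqref{eq:DD}.

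Next I would apply the theorem on sums with $\mu:=C\abs{x_0-y_0}^{\delta-2}$ to obtain symmetric $X,Y$ with
\[
(D_xf(x_0,y_0)+4(x_0-z_0),X+4I)\in\ol J^{2,+}u(x_0),\qquad(-D_yf(x_0,y_0),Y)\in\ol J^{2,-}u(y_0),
\]
satisfying the matrix inequalities \eqref{eq: Thm of Sums (1)}--\eqref{eq: Thm of Sums (2)}. The only genuinely new ingredient, special to $\Delta^N_\infty$, is that this operator is meaningful only where the gradient slot is nonzero; so before testing the equation one must check, as in \eqref{eq: Ishii-Lions Norms of 1st derivatives}, that for $C$ large
\[
\abs{D_xf(x_0,y_0)+4(x_0-z_0)},\ \abs{D_yf(x_0,y_0)}\ \ge\ \tfrac{C}{2}\delta\abs{x_0-y_0}^{\delta-1}>0,
\]
so that $\eta_1:=\tfrac{D_xf(x_0,y_0)+4(x_0-z_0)}{\abs{D_xf(x_0,y_0)+4(x_0-z_0)}}$ and $\eta_2:=\tfrac{x_0-y_0}{\abs{x_0-y_0}}$ are well defined. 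Using that $u$ is simultaneously a viscosity sub- and supersolution of $\Delta^N_\infty u=0$, the jet inclusions yield $\tr\bigl\{(\eta_1\otimes\eta_1)(X+4I)\bigr\}\ge 0$ and $\tr\bigl\{(\eta_2\otimes\eta_2)Y\bigr\}\le 0$, and subtracting and reorganizing gives $0\le T_1+T_2+T_3$, where
\[
T_1:=\tr\bigl\{(\eta_2\otimes\eta_2)(X-Y)\bigr\},\quad T_2:=\tr\bigl\{(\eta_1\otimes\eta_1-\eta_2\otimes\eta_2)X\bigr\},\quad T_3:=4\tr\{\eta_1\otimes\eta_1\}.
\]

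The three terms are then controlled as in Theorem~\ref{thm: ishii-lions p-laplace}. For $T_1$ I would note that $\tr\{(\eta_2\otimes\eta_2)(X-Y)\}$ equals the trace of the product of the positive semidefinite matrix displayed above with $\begin{pmatrix}X&0\\0&-Y\end{pmatrix}$; feeding in \eqref{eq: Thm of Sums (1)} and then the explicit computation \eqref{eq:ishii-lions-computation} reduces $T_1$ to a positive multiple of $\langle(M+\tfrac2\mu M^2)\eta_2,\eta_2\rangle$ and, with the chosen $\mu$ and the elementary bound $\delta(1-\delta)\le\tfrac14$, yields $T_1\le c_1C\delta\abs{x_0-y_0}^{\delta-2}(\delta-1)$ for an absolute constant $c_1>0$. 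Next $T_3=4\abs{\eta_1}^{2}=4$. Finally, for $T_2$ I would use $\norm{\eta_1\otimes\eta_1-\eta_2\otimes\eta_2}\le 2\abs{\eta_1-\eta_2}$, the estimate $\bigl|\tfrac{a}{\abs a}-\tfrac{b}{\abs b}\bigr|\le\tfrac{2\abs{a-b}}{\max\{\abs a,\abs b\}}$ from \cite[Lemma A.1]{siltakoski2022holder} applied with $a-b=4(x_0-z_0)$, together with the operator-norm bound $\norm{X}\le\mu+\norm{M}+\tfrac2\mu\norm{M^2}\le c_2 C\abs{x_0-y_0}^{\delta-2}$ coming from \eqref{eq: Thm of Sums (1)}--\eqref{eq: Thm of Sums (2)} and the explicit expressions for $M,M^2$, to conclude $T_2\le L\abs{x_0-y_0}^{-1}$ with $L$ independent of $C$ and of $x_0,y_0$ (the factors of $C$ cancel). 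Since $\delta-2<-1$ and $\abs{x_0-y_0}\le 1$ give $\abs{x_0-y_0}^{-1}\le\abs{x_0-y_0}^{\delta-2}$ and $1\le\abs{x_0-y_0}^{\delta-2}$, we obtain $0\le T_1+T_2+T_3\le\bigl(c_1C\delta(\delta-1)+L+4\bigr)\abs{x_0-y_0}^{\delta-2}$, which is strictly negative once $C$ is chosen large enough — the desired contradiction. Hence $\theta\le 0$, i.e.\ $u(x)-u(y)\le C\abs{x-y}^{\delta}+2\abs{x-z_0}^{2}$ for all admissible $x,y,z_0$; given $\ol x,\ol y\in B_{1/4}(0)$ with $u(\ol x)\ge u(\ol y)$, taking $z_0=\ol x$ gives $\abs{u(\ol x)-u(\ol y)}\le C\abs{\ol x-\ol y}^{\delta}$.

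The main obstacle, compared with the linear case of Theorem~\ref{thm:ishii-lions}, is that $\Delta^N_\infty$ controls the Hessian only in the single gradient direction, so \emph{all} of the surplus terms $T_2,T_3$ must be absorbed using only the crude operator-norm bound on $X$ extracted from \eqref{eq: Thm of Sums (1)}--\eqref{eq: Thm of Sums (2)}, rather than a full trace identity; and because $\Delta^N_\infty$ is discontinuous at zero gradient one must keep the localization term $2\abs{x-z_0}^{2}$ small relative to the genuine gradient of $C\abs{x-y}^{\delta}$ (which is why $C$ is taken large at the outset) so that the equation may legitimately be tested with $\eta_1,\eta_2$. Apart from these points, the computation is identical to the finite-$p$ case with the roles of $p-1$ and $p-2$ played by $1$ and $0$.
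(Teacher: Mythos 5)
Your proof is correct and follows the paper's argument essentially verbatim: the decomposition $T_1+T_2+T_3$, the use of the lower bound \eqref{eq: Thm of Sums (2)} to control $\norm{X}$ (which, as the paper's closing Remark notes, is precisely why that bound was retained even for finite $p$), and the well-definedness of $\eta_1,\eta_2$ via \eqref{eq: Ishii-Lions Norms of 1st derivatives} are exactly as in Theorem~\ref{thm: ishii-lions p-laplace}, with $I+(p-2)\eta\otimes\eta$ replaced by $\eta\otimes\eta$. The only cosmetic deviation is that you estimate $T_1$ via the rank-one matrix $\begin{pmatrix}\eta_2\\-\eta_2\end{pmatrix}\otimes\begin{pmatrix}\eta_2\\-\eta_2\end{pmatrix}$ rather than the block-diagonal $\mathcal{A}'=\begin{pmatrix}\eta_2\otimes\eta_2&0\\0&\eta_2\otimes\eta_2\end{pmatrix}$ used in the paper's proof; both are positive semidefinite and, as observed in the paper right after \eqref{eq:simple-eq}, both reduce $T_1$ to a positive multiple of $\langle(M+\tfrac2\mu M^2)\eta_2,\eta_2\rangle$, differing only by a harmless factor of $2$.
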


\begin{proof}
    We only highlight the main differences from the proof of Theorem \ref{thm: ishii-lions p-laplace}.
    First, we use a different PDE in the computation \eqref{eq: Difference (1)}.
    Second, during the estimation of $T_1$ in \eqref{eq: Difference (2)}, we replace $\mathcal{A}$ by the positive semidefinite matrix
    \[
        \mathcal{A}' := \begin{pmatrix}
    \eta_2 \otimes \eta_2&0\\
    0 & \eta_2 \otimes \eta_2
    \end{pmatrix}.
    \]
    The rest of the proof is identical.
\end{proof}
%%%%%%%%%%%%%%%%
\begin{remark} 
We have presented a proof for finite $p$ that uses the lower bound in the theorem on sums because it can be easily extended to include the case $p=\infty$. For finite $p$,  one can deduce the lower bound for
$\norm{X}$ from the equation itself.  Recall that
$$       (D_xf(x_0,y_0)+4(x_0-z_0),X+4I)  \in \overline{J}^{2,+} u(x_0),  $$     
so that we have $0  \leq \tr\left((I + (p-2)\eta_1 \otimes \eta_1 )(X + 4I)\right)$. Order the eigenvalues of $X$ as $\lambda_1\le \lambda_2\le \ldots\le \lambda_n$. Note that an upper bound for all $\lambda_i$ follows from \eqref{eq: Thm of Sums (1)}. From the fact that $u$ is a viscosity subsolution we get
$$ 0  \leq \tr\left((I + (p-2)\eta_1 \otimes \eta_1 ) X\right) + 4n +(p-2).$$
From this we have
\begin{align}
    \label{eq:lowerbound} 2-p-4n  &\leq \lambda_1+\ldots+\lambda_n+ (p-2) \langle X \eta_1, \eta_1\rangle\nonumber \\
    &\le  \lambda_1+\ldots+\lambda_{n-1}+ (p-1) \langle X \eta_1, \eta_1\rangle.
\end{align}
Thus we get 
$$2-p-4n \le (n-1+(p-1)) \lambda_n,$$
which provides a lower bound for $\lambda_n$. Since $\lambda_n$ is now bounded, we can use \eqref{eq:lowerbound} to get lower bounds on $\lambda_{n-1}$, $\lambda_{n-2}$, and so on.\par 
\end{remark}

\bibliography{citations2}
\bibliographystyle{alpha}

\end{document}